\newtheorem{theorem}{Theorem}[section]
\newtheorem{corollary}[theorem]{Corollary}
\theoremstyle{remark}
\newtheorem{remark}[theorem]{Remark}
\newtheorem{definition}[theorem]{Definition}
\def\P{\mathbb{P}}
\def\RR{\mathbb{R}}
\def\ZZ{\mathbb{Z}}
\def\V{\mathfrak{V}}
\def\a{\mathfrak{a}}
\def\b{\mathfrak{b}}
\def\c{\mathfrak{c}}
\def\aa{\tilde{a}}
\def\bb{\tilde{b}}
\def\i{\tilde{\imath}}
\def\j{\tilde{\jmath}}
\def\k{\tilde{k}}
\def\s{\tilde{s}}
\def\SS{\tilde{S}}
\def\Diff{\mathrm{Diff}}
\def\PGL{\mathrm{PGL}}
\def\PPi{\tilde{\Pi}}
\def\MM{\tilde{M}}
\def\GGamma{\tilde{\Gamma}}
\def\q{\tilde{q}}
\def\N{n_{0}}
\def\div{\mathrm{div}}
\def\f{\tilde{f}}
\def\ord{\mathrm{ord}}
\begin{document}
\title{Projective Connections and Schwarzian derivatives for Supermanifolds, and Batalin-Vilkovisky operators}
\author{Jacob George}
\address{School of Mathematics - University of Manchester, Oxford Road, Manchester M13 9PL}
\email{jgeorge@maths.manchester.ac.uk}

\begin{abstract}

We extend the notion of a Thomas projective connection (a projective
equivalence class of linear connections) for supermanifolds.  As a
by-product, we arrive at a generalisation of the multidimensional
Schwarzian derivative for the super case which was previously
unknown.

This is combined with our previous construction of a Laplacian on
the algebra of densities for a projectively connected manifold and
allows us to study Batalin-Vilkovisky type operators and the
relation between projective connections and odd Poisson structures.

\end{abstract}

\keywords{projective connection, densities, supermanifolds,
Batalin-Vilkovisky, odd Poisson}
\date{\today}
\maketitle
\section{Introduction}This paper concerns the links between projective classes on supermanifolds, second order differential operators acting on functions and densities and symmetric biderivations (called \emph{brackets}).

In the purely even case, projective classes are defined as equivalence classes of linear connections whose geodesics are the same up to reparametrisation.  In \cite{bialopap}, it was discovered that on manifolds with a projective class, any symmetric tensor $S^{ab}$ could be extended canonically to give a second order differential operator acting on functions (the \emph{projective Laplacian}).  Rephrasing this in terms of brackets, on a manifold with a projective class, any bracket has a canonical generating operator.  Further, there is a fundamental relation between the algebra of densities of all weights and projective classes.  One particular manifestation of this given in \cite{me2} is that brackets on functions can canonically be extended to brackets on densities for manifolds with a projective class.

Of course, one would hope to apply to constructions to familiar geometric situations.  The obvious example is that of Poisson manifolds.  However, requiring the brackets considered to be symmetric immediately precludes this possibility.  Switching to supermanifolds however, allows for odd brackets.  To any odd symmetric bracket $\{\,\cdot\, ,\,\cdot\,\}$ is associated an odd biderivation $[\,\cdot\, ,\,\cdot\,]$ which is antisymmetric with respect to reversed parity.  A Jacobi identity can be required of $[\,\cdot\, ,\,\cdot\,]$, the fulfillment of which results in an \emph{odd Poisson} (or \emph{Schouten}) algebra.

The purpose of this paper is to generalise the framework developed in \cite{me2, bialopap} to supermanifolds and to apply the results obtained to the study of odd Poisson brackets.  This involves defining projective classes and deriving a super version of the classical Thomas construction.  As a by-product, we obtain a multidimensional Schwarzian derivative for supermanifolds which generalises that given by Ovsienko and Tabachnikov (\cite{OvsienkoTabachnikov}).  For a supermanifold with a projective class, we construct super analogues of the projective Laplacian acting on functions and a canonical extensionsion of any bracket on functions to a bracket on densities.

In the penultimate section, we apply the preceding constructions to odd Poisson brackets.  This immediately gives conditions for the projective Laplacian associated with a Poisson structure to be a Batalin-Vilkovisky operator.  For densities, we reiterate the geometric interpretation of the components of a bracket on densities in terms of upper connections. Having done this we give conditions for the projective extension of an odd Poisson bracket to densities to also be an odd Poisson bracket.  Following \cite{oloii}, it is possible to interpret these conditions in terms of the Poisson-Lichernowicz operator.  Finally, we discuss some possible applications.

\section{The Classical case: a quick survey}
We give a whistlestop tour of selected results from \cite{me2,bialopap} omitting details.  First, a few definitions.
\begin{definition}
Let an $n$-dimensional manifold $M$ be endowed with two linear connections $\nabla$, $\bar{\nabla}$ with coefficients $\Gamma^{k}_{ij}$ and $\bar{\Gamma}^{k}_{ij}$ respectively.  The two connections belong to the same \emph{projective class} if either $\nabla$ and $\bar{\nabla}$ have the same geodesics up to reparametrisation or equivalently
\begin{equation}\label{projcond}\Pi^{k}_{ij} := \Gamma^{k}_{ij} - \frac{1}{n+1}(\delta^{k}_{i}\Gamma^{s}_{sj} + \delta^{k}_{j}\Gamma^{s}_{is}) = \bar{\Gamma}^{k}_{ij} - \frac{1}{n+1}(\delta^{k}_{i}\bar{\Gamma}^{s}_{sj} + \delta^{k}_{j}\bar{\Gamma}^{s}_{is}) =:\bar{\Pi}^{k}_{ij}.
\end{equation}
\end{definition}
The quantities $\Pi^{k}_{ij}$ are constitute an invariant of the projective class and will often be referred to as the projective class themselves.  Various authors refer to projective classes as \emph{projective connections} and \emph{projective structures}. Here we adopt the chosen wording to avoid confusion with other distinct notions of projective connection and projective structure.  For a more comprehensive guide to projective connections in their various guises, see for instance \cite{crampin} and the references contained therein.
Consider a manifold $M$ equipped with a tensor field $S^{ij}$.  This defines a natural `index raising' map $S^{\sharp}:T^{*}M \to TM$ by $S^{\sharp}(\omega_{i}dx^{i}) = S^{ij}\omega_{j}\partial_{i}$.
\begin{definition} Let $M$ be a manifold with a tensor field $S^{ij}$ and $E\to M$ a vector bundle over $M$.  An \emph{upper connection} or \emph{contravariant derivative} is an $\RR$-bilinear map $\nabla:\Gamma(T^{*}M)\times \Gamma(E)\to\Gamma(E)$ satisfying
\[\nabla^{f\omega}\sigma = f\nabla^{\omega}\sigma\mbox{ and }\nabla^{\omega}f\sigma = S^{\sharp}(\omega)(f)\sigma + f\nabla^{\omega}\sigma\]
for $f\in C^{\infty}(M)$ and any $1$-form $\omega$.
\end{definition}
On manifolds with projective classes, we have the following result from \cite{bialopap}.
\begin{theorem}\label{first result}  Let $S^{ij}$ be a symmetric tensor field and $\Pi^{k}_{ij}$ a projective class on $M$.
\begin{enumerate}
\item The expression \[S^{ij}\partial_{i}\partial_{j} + \left(\frac{2}{n+3}\partial_{j}S^{ij} - \frac{n+1}{n+3}S^{jk}\Pi^{i}_{jk}\right)\partial_{i}\]
defines an invariant second order differential operator on $C^{\infty}(M)$ (the \emph{projective Laplacian}).
\item There is an associated canonical upper connection over $S^{ij}$ in the bundle of volume forms whose coefficients are
\[\gamma^{i} = \frac{n+1}{n+3}(\partial_{j}S^{ij} + S^{jk}\Pi^{i}_{jk}).\]
\end{enumerate}
\end{theorem}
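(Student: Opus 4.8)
The plan is to prove both parts by verifying invariance under an arbitrary change of chart $x\mapsto\tilde{x}$ directly, since the expressions are written in local coordinates and ``invariant'' means precisely that they determine the same operator, respectively the same contravariant derivative, in every chart. Write $J^{a}_{i}=\partial\tilde{x}^{a}/\partial x^{i}$ for the Jacobian and $J=\det(J^{a}_{i})$. I will need three transformation laws: that $S^{ij}$ is a genuine tensor, $\tilde{S}^{ab}=S^{ij}J^{a}_{i}J^{b}_{j}$; that the naive divergence $\partial_{j}S^{ij}$ fails to transform as a vector by a computable anomaly; and that the projective symbol $\Pi^{k}_{ij}$ carries an inhomogeneous term, obtained by inserting the standard Christoffel transformation law into \eqref{projcond} and tracing. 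The resulting anomaly of $\Pi^{k}_{ij}$ is a combination of a pure second-derivative term and a Kronecker-delta term proportional to $\partial_{i}\log|J|$, the projective normalisation $\tfrac{1}{n+1}$ fixing the weight of the latter.

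For part (1), write $\Delta=S^{ij}\partial_{i}\partial_{j}+A^{i}\partial_{i}$ with $A^{i}=\frac{2}{n+3}\partial_{j}S^{ij}-\frac{n+1}{n+3}S^{jk}\Pi^{i}_{jk}$. Using $\partial_{i}\partial_{j}=J^{a}_{i}J^{b}_{j}\tilde{\partial}_{a}\tilde{\partial}_{b}+(\partial_{i}\partial_{j}\tilde{x}^{a})\tilde{\partial}_{a}$ and the tensoriality of $S$, the principal part $S^{ij}\partial_{i}\partial_{j}$ reproduces the correct symbol $\tilde{S}^{ab}\tilde{\partial}_{a}\tilde{\partial}_{b}$ together with a single first-order anomaly $S^{ij}(\partial_{i}\partial_{j}\tilde{x}^{a})\tilde{\partial}_{a}$. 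Hence $\Delta$ is chart-independent if and only if $A^{i}$ transforms as
\[\tilde{A}^{a}=A^{i}J^{a}_{i}+S^{ij}\,\partial_{i}\partial_{j}\tilde{x}^{a},\]
that is, its inhomogeneous part must be exactly $S^{ij}\partial_{i}\partial_{j}\tilde{x}^{a}$.

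The core of the proof is then to compute the anomalies of the two summands of $A^{i}$ separately. Each produces a ``pure'' contribution proportional to $S^{ij}\partial_{i}\partial_{j}\tilde{x}^{a}$ and a ``trace'' contribution of the shape $\tilde{S}^{ab}\tilde{\partial}_{b}\log|J|$, in which the second derivatives are contracted against $S$ and then carried either by the Jacobian trace (for $\partial_{j}S^{ij}$) or by the Kronecker deltas (for $S^{jk}\Pi^{i}_{jk}$, where the symmetry of $S^{ij}$ is used to merge the two delta terms). One finds that $\partial_{j}S^{ij}$ contributes $+S^{ij}\partial_{i}\partial_{j}\tilde{x}^{a}$ together with one unit of the trace term, while $S^{jk}\Pi^{i}_{jk}$ contributes $-S^{ij}\partial_{i}\partial_{j}\tilde{x}^{a}$ together with $\frac{2}{n+1}$ of it. The weights $\frac{2}{n+3}$ and $\frac{n+1}{n+3}$ are chosen precisely so that the trace terms cancel, $\frac{2}{n+3}-\frac{n+1}{n+3}\cdot\frac{2}{n+1}=0$, while the pure terms add to $\frac{2+(n+1)}{n+3}=1$, which is the displayed identity; the denominator $n+3$ is manufactured exactly by this $2+(n+1)$. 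I expect this bookkeeping --- tracking the several inhomogeneous tensors and confirming the dimension-dependent constants cancel as claimed --- to be the main obstacle, everything else being formal.

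For part (2) I would exploit the algebraic identity $A^{i}+\gamma^{i}=\partial_{j}S^{ij}$, immediate from the two displayed formulas, which lets me rewrite the projective Laplacian as
\[\Delta f=\partial_{j}\!\left(S^{ij}\partial_{i}f\right)-\gamma^{i}\partial_{i}f.\]
This exhibits $\gamma^{i}$ as precisely the correction measuring the failure of the coordinate ``divergence of the $S$-gradient'' to be invariant. Subtracting the already-computed anomaly of $\partial_{j}S^{ij}$ from that of $A^{i}$, the pure second-derivative terms cancel and one is left with
\[\tilde{\gamma}^{a}=\gamma^{i}J^{a}_{i}+\tilde{S}^{ab}\,\tilde{\partial}_{b}\log|J|,\]
up to the sign convention for the coefficient. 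This is exactly the inhomogeneous law forced on the coefficient of a contravariant derivative on the bundle of volume forms: writing the two defining axioms of an upper connection for a weight-one density $\rho\,|dx|$ and using $|d\tilde{x}|=|J|\,|dx|$ shows that well-definedness across charts requires $\gamma^{i}$ to transform with anomaly $\tilde{S}^{ab}\,\tilde{\partial}_{b}\log|J|$, that is $S^{\sharp}$ applied to the logarithmic Jacobian. Since $\gamma^{i}$ satisfies this law, the formula $\nabla^{dx^{j}}(\rho\,|dx|):=S^{jk}(\partial_{k}\rho)\,|dx|+\rho\,\gamma^{j}\,|dx|$ is chart-independent; the Leibniz rule and $C^{\infty}(M)$-linearity over $S^{\sharp}$ are built into it, so $\gamma^{i}$ defines the asserted canonical upper connection and the construction is complete.
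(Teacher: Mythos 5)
The paper does not actually prove this statement: Theorem \ref{first result} is quoted verbatim from \cite{bialopap} and used as a black box, so there is no internal proof to compare against. Your direct verification is nonetheless correct and complete in its essentials, and it is the standard argument: the anomaly of $\partial_{j}S^{ij}$ is $+S^{ij}\partial_{i}\partial_{j}\tilde{x}^{a}$ plus one unit of $\tilde{S}^{ab}\tilde{\partial}_{b}\log|J|$, the anomaly of $S^{jk}\Pi^{i}_{jk}$ is $-S^{ij}\partial_{i}\partial_{j}\tilde{x}^{a}$ plus $\tfrac{2}{n+1}$ units of the same trace term (using $H^{c}_{ab}=-\tfrac{\partial x^{k}}{\partial\tilde{x}^{a}}\tfrac{\partial x^{l}}{\partial\tilde{x}^{b}}\partial_{k}\partial_{l}\tilde{x}^{c}$ for the inhomogeneous part written in the barred chart), and the weights $\tfrac{2}{n+3}$, $\tfrac{n+1}{n+3}$ kill the trace term while normalising the pure term to $1$; the identity $A^{i}+\gamma^{i}=\partial_{j}S^{ij}$ then hands you the law $\tilde{\gamma}^{a}=\gamma^{i}J^{a}_{i}+\tilde{S}^{ab}\tilde{\partial}_{b}\log|J|$, which is exactly what well-definedness of $\nabla^{dx^{j}}(\rho|Dx|)=(S^{jk}\partial_{k}\rho+\gamma^{j}\rho)|Dx|$ requires given $\tilde{\rho}=\rho|J|^{-1}$. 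The one thing worth writing out fully in a polished version is the second-derivative identity relating $\partial^{2}x/\partial\tilde{x}^{2}$ to $\partial^{2}\tilde{x}/\partial x^{2}$, since that sign is where the $-S^{ij}\partial_{i}\partial_{j}\tilde{x}^{a}$ contribution comes from and it is the easiest place to slip.
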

We now turn to the algebra of densities and its relation with projective classes.  First we run through the requisite definitions.
\begin{definition}A \emph{density of weight $\lambda\in\RR$} on a manifold $M$ is an object formally expressed in local coordinates as $\phi(x)|Dx|^{\lambda}$ where $Dx$ is the local coordinate volume form.  The space of $\lambda$-densities is denoted $\V_{\lambda}(M)$.  The \emph{algebra of densities} is the vector space $\V(M) := \bigoplus_{\lambda\in\RR}\V_{\lambda}(M)$ with the product defined as $\phi(x)|Dx|^{\lambda}\cdot\psi(x)|Dx|^{\mu} = \phi(x)\psi(x)|Dx|^{\lambda + \mu}$ on homogeneous elements and extended by linearity.  The \emph{weight operator} $w:\V(M)\to\V(M)$ is the linear operator specified as having $\V_{\lambda}(M)$ as its $\lambda$-eigenspace.
\end{definition}

Among the primary objects of study in \cite{oloii, bialopap, me2} were so called brackets.
\begin{definition}\label{bracketdefn}A \emph{bracket} on a commutative associative unital algebra $A$ is a symmetric biderivation on $A$.  For a manifold $M$, brackets on $C^{\infty}(M)$ will be referred to as \emph{brackets on $M$}.  A bracket $\{\,\cdot\, , \,\cdot\,\}$ on $A$ is \emph{generated} by an operator $\Delta:A\to A$ if $\{a,b\} = \Delta(ab) - a\Delta(b) - \Delta(a)b + ab\Delta(1)$.
\end{definition}
In \cite{oloii}, the foundational result was to give for algebras with an invariant scalar product, a canonical generating operator for each bracket.
\begin{theorem}\label{Koszul}(\cite{oloii})  Let $A$ be an algebra as above.  Let $\{\,\cdot\, , \,\cdot\,\}$ be a bracket on $A$.
\begin{enumerate}
\item Any operator generating $\{\,\cdot\, , \,\cdot\,\}$ is of order two in the algebraic sense.  Any two operators generating $\{\,\cdot\, , \,\cdot\,\}$ differ by an operator of order one.
\item Let  $A$ be endowed additionally with an invariant scalar product. $\langle\,\cdot\, ,\,\cdot\,\rangle$\footnote{i.e. $\langle ab,c\rangle = \langle a,bc\rangle\:\forall a,b,c\in A$}  Then generating any bracket, there is an unique operator $\Delta$ which is self adjoint and satisfying $\Delta(1) = 0$.
\end{enumerate}
\end{theorem}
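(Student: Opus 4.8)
The plan is to analyse the generating condition through the \emph{polarisations} of $\Delta$. For a linear operator $\Delta\colon A\to A$ set $\Phi^{0}_{\Delta}=\Delta(1)$, $\Phi^{1}_{\Delta}(a)=\Delta(a)-a\Delta(1)$ and, in general,
\[
\Phi^{k}_{\Delta}(a_{1},\dots,a_{k})=\sum_{S\subseteq\{1,\dots,k\}}(-1)^{k-|S|}\Big(\prod_{i\notin S}a_{i}\Big)\Delta\Big(\prod_{j\in S}a_{j}\Big),
\]
so that $\Phi^{2}_{\Delta}(a,b)$ is precisely the bracket generated by $\Delta$. I would invoke the standard characterisation that $\Delta$ has order $\le k$ in the algebraic sense exactly when $\Phi^{k+1}_{\Delta}\equiv 0$, together with the recursion $\Phi^{k+1}_{\Delta}(a_{0},\dots,a_{k})=\Phi^{k}_{\Delta}(a_{0}a_{1},a_{2},\dots,a_{k})-a_{0}\Phi^{k}_{\Delta}(a_{1},\dots,a_{k})-a_{1}\Phi^{k}_{\Delta}(a_{0},a_{2},\dots,a_{k})$.

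For part (1), the case $k=2$ of the recursion reads $\Phi^{3}_{\Delta}(a,b,c)=\{ab,c\}-a\{b,c\}-b\{a,c\}$, and this vanishes because the bracket is a biderivation (Leibniz in the first argument). Hence $\Phi^{3}_{\Delta}\equiv 0$ and $\Delta$ is of order $\le 2$. If $\Delta_{1}$ and $\Delta_{2}$ both generate $\{\,\cdot\,,\,\cdot\,\}$ then, since $\Phi^{2}$ is linear in the operator, $\Phi^{2}_{\Delta_{1}-\Delta_{2}}=0$, so $\Delta_{1}-\Delta_{2}$ has order $\le 1$; this is the second assertion.

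For part (2) I would first read off what any solution must be, which gives uniqueness, and then turn the formula into a definition to get existence. Assume $\langle\,\cdot\,,\,\cdot\,\rangle$ is symmetric and non-degenerate. If $\Delta$ is self-adjoint, generates the bracket and satisfies $\Delta(1)=0$, then $\{a,b\}=\Delta(ab)-a\Delta(b)-b\Delta(a)$; pairing with $1$ and using invariance together with self-adjointness kills the first term and identifies the other two, yielding $\langle\{a,b\},1\rangle=-2\langle\Delta(a),b\rangle$. Thus $\langle\Delta(a),b\rangle=-\tfrac{1}{2}\langle\{a,b\},1\rangle$ for every $b$, and non-degeneracy determines $\Delta$ uniquely.

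Conversely I would \emph{define} $\Delta$ by $\langle\Delta(a),b\rangle=-\tfrac{1}{2}\langle\{a,b\},1\rangle$, the element $\Delta(a)$ being well defined by non-degeneracy. Symmetry of the bracket makes the right-hand side symmetric in $a$ and $b$, so $\Delta$ is self-adjoint, and $\{1,\,\cdot\,\}=0$ gives $\Delta(1)=0$. The one substantial step is to check that this $\Delta$ actually generates $\{\,\cdot\,,\,\cdot\,\}$: pairing $\Phi^{2}_{\Delta}(a,b)$ with an arbitrary $c$ and using the definition of $\Delta$, invariance and $\Delta(1)=0$ gives $\langle\Phi^{2}_{\Delta}(a,b),c\rangle=-\tfrac{1}{2}\big(\langle\{ab,c\},1\rangle-\langle\{b,ac\},1\rangle-\langle\{a,bc\},1\rangle\big)$; expanding each bracket by Leibniz collapses the parenthesis to $-2c\{a,b\}$, so the expression equals $\langle c\{a,b\},1\rangle=\langle\{a,b\},c\rangle$, and non-degeneracy gives $\Phi^{2}_{\Delta}(a,b)=\{a,b\}$. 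I expect the main obstacle to be exactly this last verification --- in particular, spotting the normalisation $-\tfrac12$ and carrying out the Leibniz collapse $\{ab,c\}-\{b,ac\}-\{a,bc\}=-2c\{a,b\}$; a minor point to isolate is the representability ensuring $\Delta(a)\in A$ is genuinely defined by the functional $b\mapsto-\tfrac{1}{2}\langle\{a,b\},1\rangle$, which holds because the invariant scalar product in the paper's setting is non-degenerate in the required strong sense.
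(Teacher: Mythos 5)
The paper itself offers no proof of this statement --- it is quoted verbatim from \cite{oloii} --- so your argument can only be measured against the standard one, which it essentially reproduces, correctly. Part (1) via the polarisations $\Phi^{k}_{\Delta}$ and the recursion $\Phi^{k+1}_{\Delta}(a_{0},\dots,a_{k})=\Phi^{k}_{\Delta}(a_{0}a_{1},a_{2},\dots)-a_{0}\Phi^{k}_{\Delta}(a_{1},\dots)-a_{1}\Phi^{k}_{\Delta}(a_{0},a_{2},\dots)$ is exactly right: the recursion checks out, $\Phi^{3}_{\Delta}$ vanishes by the Leibniz property in the first slot, and linearity of $\Phi^{2}$ in $\Delta$ gives the statement about differences. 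In part (2) the uniqueness computation $\langle\{a,b\},1\rangle=-2\langle\Delta(a),b\rangle$ and the Leibniz collapse $\{ab,c\}-\{b,ac\}-\{a,bc\}=-2c\{a,b\}$ are both correct (I verified the signs), and this is precisely the pairing identity underlying the result in \cite{oloii}.

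The one point to be honest about is the existence half of (2). You correctly isolate it: defining $\Delta(a)$ as the element representing $b\mapsto-\tfrac{1}{2}\langle\{a,b\},1\rangle$ requires more than non-degeneracy of the pairing --- it requires that this particular functional be representable, which for an infinite-dimensional algebra such as $\V(M)$ is not automatic from non-degeneracy alone. In the intended application this is not established abstractly but by exhibiting the operator explicitly in local coordinates (this is what formula (\ref{THsupercanonical}) in the paper does); the pairing argument then serves only for uniqueness and for verifying self-adjointness of the explicit candidate. So your proof is complete as an argument modulo the representability hypothesis you flag, but to match the cited theorem in its actual setting you would replace the abstract existence step by the concrete construction. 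This is a presentational rather than a logical defect, since you state the extra hypothesis you are using.
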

Brackets on $M$ are synonymous with symmetric tensor fields with two upper indices by defining $\{f,g\} = S^{ij}\partial_{i}f\partial_{j}g$.  In the light of this, the first part of Theorem \ref{first result} can be rephrased as associating a canonical generating operator to any bracket on a manifold with a projective class.  We can also consider brackets on $\V(M)$.  We will always assume these to be homogenous with respect to the $\RR$-grading.  For a bracket of weight $\lambda$, these are specified by a triple $(S^{ij},\gamma^{i},\theta)$ as follows
\begin{equation}\label{bracketcomponents}
S^{ij}|Dx|^{\lambda} = \{x^{i},x^{j}\}, \gamma|Dx|^{\lambda+1} = \{x^{i},|Dx|\}, \theta|Dx|^{\lambda+2} = \{|Dx|,|Dx|\}.
\end{equation}
Moreover, there is an invariant scalar product on $\V(M)$, given by integrating in the case that the weights of the two arguments sum to one and set to zero otherwise.  The application of Theorem \ref{Koszul} to this situation was the main thrust of \cite{oloii}.

In \cite{me2}, the following results were obtained.
\begin{theorem}\label{main}
Let $M$ be a manifold of dimension $n>1$ endowed with a projective class $\Pi^{k}_{ij}$ and a tensor density $S^{ij}$ of weight $\lambda\neq\frac{n+2}{n+1},\frac{n+3}{n+1}$ (i.e. $S^{ij}|Dx|^{\lambda}$ transforms as a tensor).  Then there is a canonical operator extending $S^{ij}$.
\end{theorem}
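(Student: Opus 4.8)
The plan is to produce the operator in local coordinates and to fix its lower-order coefficients by demanding coordinate invariance, just as in the derivation of Theorem~\ref{first result}, but now carrying the weight $\lambda$ of $S^{ij}$ and the weight $\mu$ of the argument as extra parameters. Since $S^{ij}$ has weight $\lambda$, an operator with principal symbol $S^{ij}$ must raise density weight by $\lambda$, so I look for $\Delta\colon\V_{\mu}(M)\to\V_{\mu+\lambda}(M)$ (equivalently, a single operator on $\V(M)$ whose coefficients may depend on the weight operator $w$). Writing it as
\[
\Delta\bigl(\phi\,|Dx|^{\mu}\bigr)=\Bigl(S^{ij}\partial_{i}\partial_{j}\phi+b^{i}\partial_{i}\phi+c\,\phi\Bigr)|Dx|^{\mu+\lambda},
\]
the principal symbol is already prescribed, so the entire content is the determination of $b^{i}$ and $c$ as natural expressions built from $S^{ij}$, its derivative $\partial_{j}S^{ij}$, the projective class $\Pi^{k}_{ij}$, and the numbers $n,\lambda,\mu$.

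The key step is to compute the transformation law under a change of coordinates $x\mapsto x'$. Applied to a density of weight $\mu$, the principal term $S^{ij}\partial_{i}\partial_{j}$ is not invariant: it generates anomalous contributions involving first and second derivatives of the Jacobian, arising both from second derivatives hitting the coordinate change and from the weight factor $|Dx|^{\mu}$. I would make the ansatz $b^{i}=\alpha\,\partial_{j}S^{ij}+\beta\,S^{jk}\Pi^{i}_{jk}$ with coefficients $\alpha,\beta$ depending on $n,\lambda,\mu$, and exploit the fact --- central to the whole projective story --- that $\Pi^{k}_{ij}$ transforms tensorially up to an explicit anomaly expressible through second logarithmic derivatives of the Jacobian. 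Matching the independent anomalous terms produced by the principal part against those supplied by $\partial_{j}S^{ij}$ and by $S^{jk}\Pi^{i}_{jk}$ yields a small linear system for $\alpha,\beta$, together with a relation fixing $c$. Solving it gives the unique invariant choice, which is automatically canonical since no data beyond $S^{ij}$ and $\Pi^{k}_{ij}$ has been used; at $\lambda=0,\mu=0$ it must collapse to the projective Laplacian of Theorem~\ref{first result}, a useful check on the bookkeeping.

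The excluded weights should emerge as the degeneracies of this linear system. The coefficients $\alpha,\beta$ will be rational in $\lambda$ with denominators that, by analogy with the factor $\frac{1}{n+3}$ occurring in Theorem~\ref{first result} at weight $0$, I expect to be proportional to $\bigl((n+1)\lambda-(n+2)\bigr)$ and $\bigl((n+1)\lambda-(n+3)\bigr)$. These vanish precisely at $\lambda=\frac{n+2}{n+1}$ and $\lambda=\frac{n+3}{n+1}$, the two values excluded in the statement: at those resonant weights the principal anomaly cannot be cancelled by a first-order correction of the permitted form (or the correction fails to be unique), and the construction breaks down. Away from them the solution exists and is unique, proving the theorem.

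The main obstacle will be the anomaly bookkeeping in the middle step: one must follow carefully how the two sources of non-invariance --- the second-derivative anomaly of $\partial_{i}\partial_{j}$ and the weight anomaly from $|Dx|^{\mu}$ --- combine, and verify that their sum lies in the span of the anomalies carried by $\partial_{j}S^{ij}$ and $S^{jk}\Pi^{i}_{jk}$ with the correct $\lambda,\mu$-dependent coefficients. The cleanest way to organise this, which I would adopt, is to first choose any representative connection $\Gamma^{k}_{ij}$ in the projective class, build $\Delta$ from $\Gamma$ and its trace, and then verify that under a projective change $\Gamma^{k}_{ij}\mapsto\Gamma^{k}_{ij}+\delta^{k}_{i}\psi_{j}+\delta^{k}_{j}\psi_{i}$ all $\psi$-dependence cancels; this simultaneously guarantees dependence on $\Pi^{k}_{ij}$ alone and isolates exactly the combinations in which $\lambda$ enters, pinpointing where the denominators degenerate.
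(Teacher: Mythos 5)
Your strategy --- fix the principal symbol $S^{ij}\partial_i\partial_j$, posit lower-order corrections built from $\partial_jS^{ij}$ and $S^{jk}\Pi^{i}_{jk}$ with weight-dependent coefficients, and solve the linear system forced by coordinate invariance --- is sound in outline and would locate the excluded weights exactly where you predict, but it is not the route the paper takes. The paper (quoting \cite{me2}, and mirrored here in the super case) obtains the operator from the Thomas construction: one passes to the manifold $\MM$ of dimension $n+1$, on which the projective class lifts to a genuine linear connection $\GGamma^{\c}_{\a\b}$, identifies $\V(M)$ with a subalgebra of $C^{\infty}(\MM)$ via $\phi|Dx|^{\lambda}\mapsto\phi e^{\lambda x^{0}}$, and then simply applies the projective Laplacian of Theorem \ref{first result} on $\MM$ to the symbol determined by the triple $(S^{ij},\gamma^{i},\theta)$; imposing self-adjointness with respect to the canonical scalar product on $\V(M)$ then fixes $\gamma^{i}$ and $\theta$ uniquely, with denominators $(n+3)-\lambda(n+1)$ and $(n+2)-\lambda(n+1)$ producing precisely the two excluded weights. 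What the Thomas route buys is that all the anomaly bookkeeping you identify as the main obstacle is done once and for all in constructing the component $\GGamma^{0}_{ij}$ of the lifted connection; what your direct route buys is that it never leaves $M$ and makes the role of each resonance transparent. The one real soft spot in your plan is the zeroth-order term: you give no ansatz for $c$, and the invariant combination of $\partial_{q}\Pi^{q}_{ij}$ and $\Pi^{p}_{qi}\Pi^{q}_{pj}$ that must enter it (the projective Ricci-type tensor $B_{ij}$ appearing in the paper's formulae, contracted as $S^{jk}B_{kj}$) is the genuinely delicate object that a direct computation would have to discover by hand, since $\Pi^{k}_{ij}$ itself is not a tensor and only this particular quadratic combination has a controllable transformation anomaly.
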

\begin{theorem}\label{bracketextension}
Let $M$ be a manifold with a projective class.  Then any bracket on $M$ can be extended to a bracket on $\V(M)$.
\end{theorem}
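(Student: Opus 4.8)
The plan is to realise the extended bracket through its components and to use the projective class to supply, canonically, the data not already contained in the bracket on functions.

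Recall first that a bracket on $M$ is nothing but a symmetric contravariant $2$-tensor $S^{ij}$, through $\{f,g\} = S^{ij}\partial_i f\,\partial_j g$, and that $C^{\infty}(M)$ sits inside $\V(M)$ as the densities of weight $0$. I therefore look for a bracket on $\V(M)$ of weight $0$ restricting to the given one on $\V_{0}(M) = C^{\infty}(M)$. Being a symmetric biderivation, such a bracket is determined by its values on a set of algebra generators; locally $\V(M)$ is generated by the coordinate functions $x^{i}$ together with the volume element $|Dx|$ and its (real) powers. Consequently the sought-after bracket is encoded by a triple $(S^{ij},\gamma^{i},\theta)$ exactly as in \eqref{bracketcomponents} with $\lambda = 0$, and to produce the extension it suffices to prescribe $\gamma^{i}$ and $\theta$ invariantly, the component $S^{ij}$ being the given tensor.

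For the mixed component I would set $\{x^{i},|Dx|\} = \gamma^{i}|Dx|$ with $\gamma^{i}$ the coefficients of the canonical upper connection on volume forms furnished by Theorem \ref{first result}(2), that is $\gamma^{i} = \frac{n+1}{n+3}(\partial_{j}S^{ij} + S^{jk}\Pi^{i}_{jk})$. Extending by the Leibniz rule then gives a well-defined pairing $\{f,\rho\}$ of a function with an arbitrary density, precisely because these $\gamma^{i}$ are guaranteed by Theorem \ref{first result} to transform as upper-connection coefficients. The remaining and genuinely new datum is the pure density component $\theta$, defined by $\{|Dx|,|Dx|\} = \theta|Dx|^{2}$; here $\theta$ is a weight-$0$ quantity (a function), and I would define it by the canonical invariant built from $S^{ij}$ and $\Pi^{k}_{ij}$ obtained as the divergence of the upper connection $\gamma^{i}$, this being the only freedom left once $S$ and $\gamma$ are fixed.

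The heart of the proof is then to check that $\theta$, so defined, carries exactly the transformation law forced on it by the demand that $\{|Dx|,|Dx|\}$ be a genuine weight-$2$ density. Concretely, under a change of coordinates $\{|Dx|,|Dx|\}$ acquires inhomogeneous terms involving the first and second derivatives of the Jacobian paired with $S^{ij}$ and $\gamma^{i}$; one must verify that the projective combination defining $\theta$ absorbs them, in the same way that the normalising factor $\frac{n+1}{n+3}$ and the projective symbols $\Pi^{k}_{ij}$ make $\gamma^{i}$ transform correctly. This transformation-law computation is the main obstacle. Once it is in place, symmetry and the Leibniz rule in each slot hold by construction, the bracket is homogeneous of weight $0$, and its restriction to $C^{\infty}(M)$ is the original bracket since $\{x^{i},x^{j}\} = S^{ij}$; note that, unlike the operator extension of Theorem \ref{main}, no exceptional weights need be excluded here, because $\gamma^{i}$ and $\theta$ are prescribed once and for all at the level of generators and the bracket then propagates to every weight by the biderivation property.
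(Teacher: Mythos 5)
Your overall strategy --- encode the extension by the triple $(S^{ij},\gamma^{i},\theta)$ of (\ref{bracketcomponents}) with $\lambda=0$, take $\gamma^{i}$ from Theorem \ref{first result}(2), and then supply an invariant $\theta$ --- is a legitimate route and differs from the one the paper relies on. But it has a genuine gap precisely where you flag ``the main obstacle'': you never actually construct $\theta$. The naive divergence $\partial_{k}\gamma^{k}$ is \emph{not} a scalar --- under a change of coordinates it acquires inhomogeneous terms involving second and third derivatives of the transition functions --- and the cancellation cannot be achieved by $S$ and $\gamma$ alone. One needs the curvature-type correction built from the projective class, namely (in the even case) $B_{kj}=\frac{n+1}{n-1}\left(\partial_{q}\Pi^{q}_{kj}+\Pi^{p}_{qk}\Pi^{q}_{pj}\right)$, and the invariant combination is $\theta=\frac{n+1}{n+2}\left(\partial_{k}\gamma^{k}-S^{kj}B_{jk}\right)$, as one reads off from the super formulae in the paper by setting $m=0$, $\lambda=0$. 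Identifying this combination and its normalising coefficient \emph{is} the content of the theorem, so saying ``define $\theta$ as the canonical invariant obtained as the divergence of $\gamma^{i}$'' assumes the conclusion. A side effect: because you never see $B_{jk}$, you miss that its denominator $n-1$ forces $n>1$; your remark that no exceptional values arise is therefore not quite right (the excluded \emph{weights} are indeed avoided at $\lambda=0$, but the dimension restriction survives).

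For contrast, the route the paper takes (here in the super case, and in \cite{me2} classically) avoids the component-by-component transformation check entirely: one passes to the Thomas manifold $\MM$ of dimension $n+1$, on which densities become genuine functions, the projective class lifts to a linear connection and hence a projective class $\PPi^{\c}_{\a\b}$, and the triple $(S,\gamma,\theta)$ assembles into a single tensor $S^{\a\b}$ on $\MM$. Applying the already-established projective Laplacian formula of Theorem \ref{first result} on $\MM$ then produces a generating operator, and hence the extended bracket, with invariance automatic. If you want to keep your direct approach, you must carry out the transformation computation for $\partial_{k}\gamma^{k}$ and exhibit the $B_{jk}$ correction explicitly; as written, the crucial step is deferred rather than proved.
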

The notion of `extending' a bracket makes sense here since a bracket on $M$ is a bracket on the $0$-graded slice of $\V(M)$, $\V_{0}(M) = C^{\infty}(M)$.

\section{The generalisation for supermanifolds}
The machinery developed and used in \cite{bialopap} and \cite{me2} can be generalised wholesale for supermanifolds.

\subsection{Projective Classes}
In defining projective classes, it is best to adopt the geometric interpretation of $\Pi^{k}_{ij}$ as defining a projective Ehresmann connection on $TM$.  A linear connection on $TM$ induces a fibre bundle connection on $\P (TM)$ as the image of the horizontal distribution on $TM$ under the derivative of the projection map.  Two linear connections on $TM$ in the same proejctive class induce the same connection on $\P (TM)$.  For a more explicit treatment see \cite{me2}.

Replicating this process in the super case involves defining the traceless part of linear connection coefficients. To this end, consider an $n|m$-dimensional supervector space $V$.  Let $e_{i}$ be a basis of $V$ and $e^{i}$ denote the dual basis of $V^{*}$.  The trace to be defined should be a map $\Sigma^{2}V^{*}\otimes V\to V^{*}$, where $\Sigma^{2}V^{*}$ denotes the symmetric square of $V^{*}$. We define $\div:\Sigma^{2}V^{*}\otimes V \to V^{*}$ on basis vectors by\footnote{Throughout we adopt the convention that vectors are written $v^{i}e_{i}$ and covectors as $e^{i}v_{i}$.  The parity of any object $X$ will be denoted $\tilde{X}$.}
\[\div(e^{i}\vee e^{j}\otimes e_{k}) = e^{j}\delta^{i}_{k}(-1)^{\i(\j + \k)} + e^{i}\delta^{j}_{k}(-1)^{\j\k}\]
where $\i$ denotes the parity of $e_{i}$, $\delta^{k}_{i}$ refers to the Kronecker delta and $\vee$ is the symmetric product.  On arbitrary elements of $\Sigma^{2}V^{*}\otimes V$ therefore, $\div(e^{i}\vee e^{j}A^{k}_{ji}\otimes e_{k}) = e^{i}2A^{j}_{ij}(-1)^{\j}$.  The map $\div$ will be the notion of `trace' used here.  It is nothing but the super trace of $A^{k}_{ij}$ viewed as an endomorphism valued form on $V$.

There is also a natural injection $j:V^{*}\to \Sigma^{2}V^{*}\otimes V$ given by
\[\phi\mapsto \phi\vee e^{i}\otimes e_{i}.\]
Since the composition $\div\circ j$ is simply $(n-m+1)id$, the projection of $A\in\Sigma^{2}V^{*}\otimes V$ onto its traceless part is $A - \frac{j\circ \div (A)}{n-m+1}$.  Explicitly, in terms of the bases of $V$ and $V^{*}$ given above, the trace free part of $e^{i}\vee e^{j}A^{k}_{ji}\otimes e_{k}$ is
\[e^{i}\vee e^{j}\left(A^{k}_{ji} - \frac{1}{n-m+1}(A^{s}_{js}\delta^{k}_{i}(-1)^{\s} + A^{s}_{is}\delta^{k}_{j}(-1)^{\i\j+\s})\right)\otimes e_{k}.\]  This puts us in a position to define projective classes.
\begin{definition}Two symmetric linear connections $\nabla$ and $\bar{\nabla}$ on an $n|m$-dimensional supermanifold define the same \emph{projective class} if their coefficients $\Gamma^{k}_{ij}$ and $\bar{\Gamma}^{k}_{ij}$ satisfy
\begin{equation}\label{supercoeffs}
\begin{split}
\Pi^{k}_{ji} :=& \Gamma^{k}_{ji} - \frac{1}{n-m+1}\left(\Gamma^{s}_{js}\delta^{k}_{i}(-1)^{\s} + \Gamma^{s}_{is}\delta^{k}_{j}(-1)^{\i\j+\s}\right)\\ &= \GGamma^{k}_{ji} - \frac{1}{n-m+1}\left(\GGamma^{s}_{js}\delta^{k}_{i}(-1)^{\s} + \GGamma^{s}_{is}\delta^{k}_{j}(-1)^{\i\j+\s}\right) =: \PPi^{k}_{ji}.
\end{split}
\end{equation}
\end{definition}
Retrospectively, the formula for $\Pi^{k}_{ij}$ on a supermanifold is easily recognisable as a super version of (\ref{projcond}), replacing the dimension $n$ by $n-m$ and trace with super trace.
\subsubsection*{Multidimensional super-Schwarzian Derivatives}
The notation $\div$ and $j$ used above is a deliberate reference to Ovsienko and Tabachnikov's book \cite{OvsienkoTabachnikov}, the construction of a `trace free' part mirorring in the super setting the treatment given there.  The purpose of defining a trace free part there was in order to construct a multidimensional version of the Schwarzian derivative.  This can be defined as a cocycle measuring  the failure of the quantities $\Pi^{k}_{ij}$ to transform as a tensor.  The formula (\ref{supercoeffs}) then automatically gives a definition of the multidimensional Schwarzian derivative in the super case.  Noting that
\[\partial_{i}\log J = \frac{\partial^{2}\bar{x}^{\sigma}}{\partial x^{i}\partial x^{k}}\frac{\partial x^{k}}{\partial\bar{x}^{\sigma}}(-1)^{\k},\mbox{ where }\bar{x}^{\alpha} = \bar{x}^{\alpha}(x^{i})\mbox{ and }J = \left|\det\frac{D\bar{x}}{Dx}\right|,\]
the coordinate expression for the Schwarzian derivative of a map $f:\RR^{n|m}\to\RR^{n|m}$ is
\begin{equation}\mathfrak{S}^{k}_{ij}(f) = \frac{\partial^{2}f^{\sigma}}{\partial x^{i}\partial x^{j}}\frac{\partial x^{k}}{\partial f^{\sigma}} - \frac{1}{n-m+1}\left(\delta^{k}_{i}\partial_{j}\log J_{f} + \delta^{k}_{j}\partial_{i}\log J_{f} (-1)^{\i\j}\right).
\end{equation}
Ovsienko and Tabachnikov give a relation between their multidimensional Schwarzian derivative and the classical $1$-dimensional version in terms of $\PGL$-relative cocycles in the group cohomologies of $\Diff(\RR P^{1})$ and $\Diff(\RR P^{n})$.  It remains uninvestigated whether Duval and Michel's definition (see \cite{DuvalMichel}) in the case of the $1|m$-dimensional super-circle and the definition given here stand in a similar relation.

\subsection{The Projective Laplacian and upper volume connections}
For supermanifolds endowed with a projective class, there is an analogue of Theorem \ref{first result}.
\begin{theorem}\label{supersquare}
Let $M$ be an $n|m$-dimensional supermanifold with a projective class $\Pi^{k}_{ji}$ and a tensor field $S^{ij}$.
\begin{enumerate}
\item Then \begin{equation}\label{superprojectivelaplacian}
S^{ij}\partial_{j}\partial_{i} + \left(\frac{2}{n-m+3}\partial_{j}S^{ji}(-1)^{\j(\SS+1)} - \frac{n-m+1}{n-m+3}S^{jk}\Pi^{i}_{kj}\right)\partial_{i}
\end{equation}
is an invariant differential operator on $M$.
\item There is an associated connection in the bundle of volume forms whose coefficients are
\[\gamma^{i} = \frac{n-m+1}{n-m+3}\left(\partial_{j} S^{ji}(-1)^{\j(\SS+1)} + S^{jk}\Pi^{i}_{kj}\right).\]
\end{enumerate}
\end{theorem}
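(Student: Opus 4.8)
The plan is to prove invariance directly, by computing how the operator (\ref{superprojectivelaplacian}) transforms under a change of local coordinates $x^{i}\mapsto\bar{x}^{\alpha}(x)$ and verifying that every non-tensorial contribution cancels. The structure of the argument parallels the classical computation of \cite{bialopap}; the one substantive difference is the bookkeeping of parity factors, and everything should specialise to Theorem \ref{first result} on setting $m=0$ and dropping the signs.

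First I would record the transformation laws of the three ingredients. The tensor $S^{ij}$ transforms by the Jacobian $\partial\bar{x}^{\alpha}/\partial x^{i}$ with the appropriate super signs, so that differentiating this law the quantity $\partial_{j}S^{ji}(-1)^{\j(\SS+1)}$ acquires an anomaly in which $\partial_{j}$ falls on the Jacobian factors, producing terms containing $\partial^{2}\bar{x}/\partial x\,\partial x$. The projective class $\Pi^{i}_{kj}$, being a connection modulo its super-trace, transforms as a tensor plus the super-Schwarzian anomaly $\mathfrak{S}^{i}_{kj}$ given above; only this anomalous piece will survive the contraction with $S^{jk}$.

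The core computation is the transformation of the leading term $S^{ij}\partial_{j}\partial_{i}$. Applying the chain rule twice, the second derivative $\partial_{j}\partial_{i}$ generates, besides the expected leading symbol $\bar{\partial}_{\beta}\bar{\partial}_{\alpha}$, a first-order piece proportional to $\partial^{2}\bar{x}^{\alpha}/\partial x^{i}\partial x^{j}$. I would then collect all first-order anomalies---from the symbol, from the divergence term, and from the Schwarzian part of $\Pi$---and show that with the specific constants $2/(n-m+3)$ and $-(n-m+1)/(n-m+3)$ the total first-order anomaly cancels identically. This is the crux, and it is precisely here that the normalisation matters: the factor $n-m+1$ is the one built into $\Pi$ through $\div\circ j=(n-m+1)\,\mathrm{id}$, so the cancellation is governed by the same super-trace that defines the projective class, and the systematic replacement of $n$ by $n-m$ throughout is a direct consequence of passing from the ordinary trace to $\div$.

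For part (2) I would exploit the identity, already visible classically, that the first-order coefficient of the operator in (1) equals $\partial_{j}S^{ji}(-1)^{\j(\SS+1)}-\gamma^{i}$. Once invariance of that operator is established, the transformation rule for $\gamma^{i}$ is forced by the (now known) transformation of $\partial_{j}S^{ji}(-1)^{\j(\SS+1)}$, and it remains only to check that the resulting rule is exactly that of the coefficients of an upper connection over $S^{ij}$ in the bundle of volume forms---tensorial up to a term in $\partial_{i}\log J$, using the expression for $\partial_{i}\log J$ recorded before the Schwarzian formula. The main obstacle throughout is not conceptual but the disciplined tracking of the factors $(-1)^{\cdots}$ through the chain rule and the symmetrisations: a single misplaced sign destroys the cancellation, so I would organise the calculation to pair each anomalous term with its partner before summing.
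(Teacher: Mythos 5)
The paper offers no proof of Theorem \ref{supersquare} at all---it is asserted as the ``wholesale'' super-generalisation of Theorem \ref{first result} from \cite{bialopap}---and your plan of verifying invariance directly under a coordinate change, pairing the second-derivative anomaly of the symbol $S^{ij}\partial_{j}\partial_{i}$ with the anomalies of $\partial_{j}S^{ji}(-1)^{\j(\SS+1)}$ and the super-Schwarzian part of $\Pi^{i}_{kj}$, is precisely the computation that assertion rests on; your key identity that the first-order coefficient equals $\partial_{j}S^{ji}(-1)^{\j(\SS+1)}-\gamma^{i}$ is also correct and does reduce part (2) to part (1). The one caveat is that you announce rather than execute the cancellation of the first-order anomalies, but that is no less detail than the paper itself supplies.
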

\subsection{Densities, Brackets and Operators}
The definition of the algebra of densities carries over word for word, now taking $|Dx|$ to be the local Berezin volume element and $J$ to be the modulus of the Berezinian of the change of coordinates.  Brackets are defined again as being symmetric biderivations.  In this context however, the bracket itself has a parity which `sits at the opening bracket'.  A bracket of parity $\epsilon$ on a $\ZZ_{2}$-graded algebra $A$ is then generated by an operator $\Delta: A\to A$ if for any $a,b\in A$,
\[\{a,b\} = \Delta(ab) - a\Delta(b)(-1)^{\a\tilde{\Delta}} - \Delta(a)b + ab\Delta(1)(-1)^{(\aa+\bb)\tilde{\Delta}}.\]
Theorem \ref{Koszul} holds for $\ZZ_{2}$-graded algebras, in fact this was the original context in \cite{oloii}.  Taking (\ref{bracketcomponents}) as the components of a bracket on $\V(M)$, the unique self-adjoint constant-free generating operator guaranteed by the theorem and given in \cite{oloii} is
\begin{equation}\label{THsupercanonical}
\begin{split}
\Delta = |Dx|^{\lambda}&\left(S^{ij}\partial_{j}\partial_{i} + 2\gamma^{i}w\partial_{i} + \theta w^{2} + \left(\partial_{j}S^{ji}(-1)^{\j(\epsilon + 1)} + (\lambda - 1)\gamma^{i}\right)\partial_{i}\vphantom{\left(\partial_{k}\gamma^{k}(-1)^{\k(\epsilon + 1)} + (\lambda - 1)\theta\right)}\right.\\&\left. + \left(\partial_{k}\gamma^{k}(-1)^{\k(\epsilon + 1)} + (\lambda - 1)\theta\right)w\right).
\end{split}
\end{equation}
Note that the parity of $S$, $\gamma$ and $\theta$ is $\epsilon$.

\subsection{The Thomas Construction}
One of the primary architects of the theory of projective classes, T. Y. Thomas sought a way of studying the projective geometry of a manifold in terms of the affine geometry of another related manifold.  This is the content of what I will call the \emph{Thomas construction}.  Precisely, associated to any $n$-dimensional manifold $M$, there is a manifold $\MM$, of dinmension $n+1$ such that any projective class on $M$ can be lifted canonically to a linear conenction on $\MM$.  The manifold $\MM$ also has the property that densities on $M$ can be viewed as functions on $\MM$ in a natural way.  This was one of the main observations of \cite{bialopap}.  For specifics concerning the Thomas construction, see \cite{bialopap,me2} or Thomas' original paper \cite{Thomas2}.

In analogy with the classical Thomas construction, for supermanifolds $\MM$ is constructed from $M$ adding an extra even coordinate to each chart. Let $(x^{1},\dots,x^{n+m})$ be a local coordinate system on $M$.  Then $x^{0},\dots, x^{n+m}$ is a local coordinate system on $\MM$.  Under a change of coordinates $\bar{x}^{i} = \bar{x}^{i}(x^{j})$ on $M$, the transformation law for $x^{0}$ is $\bar{x}^{0}:= x^{0} + \log J$ where $J$ is the absolute value of the Berezinian of the coordinate change.

\begin{theorem}A projective class $\Pi^{k}_{ij}$ on $M$ defines a linear connection $\GGamma^{\c}_{\a\b}$ on $\MM$ via
\begin{equation}\begin{split}
\GGamma^{k}_{ij} = \Pi^{k}_{ij},\quad \GGamma^{\c}_{0\a} = \GGamma^{\c}_{\a 0} = \frac{-\delta^{\c}_{\a}}{n-m+1}\\
\GGamma^{0}_{ji} = \frac{n-m+1}{n-m-1}\left(\partial_{q}\Pi^{q}_{ji} - \Pi^{p}_{qj}\Pi^{q}_{pi}\right)(-1)^{\q(1+\i+\j)}.
\end{split}
\end{equation}
Here Gothic indices range from $0$ to $n+m$ while Roman ones range from $1$ to $n+m$.
\end{theorem}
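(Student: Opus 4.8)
The plan is to verify directly that the stated coefficients $\GGamma^{\c}_{\a\b}$ transform correctly as a linear connection on $\MM$ under the induced coordinate changes, since once the transformation law holds, the object is by definition a linear connection. First I would record the relevant Jacobian data: on $\MM$ we have $\bar{x}^{i} = \bar{x}^{i}(x^{j})$ for the original coordinates together with $\bar{x}^{0} = x^{0} + \log J$. Differentiating gives $\frac{\partial \bar{x}^{0}}{\partial x^{0}} = 1$, $\frac{\partial \bar{x}^{0}}{\partial x^{j}} = \partial_{j}\log J$, and the remaining entries of the extended Jacobian matrix coming from the $M$-part. The identity quoted just before the theorem, namely $\partial_{i}\log J = \frac{\partial^{2}\bar{x}^{\sigma}}{\partial x^{i}\partial x^{k}}\frac{\partial x^{k}}{\partial\bar{x}^{\sigma}}(-1)^{\k}$, is precisely what will convert second-derivative (inhomogeneous) terms in the connection transformation law into first-derivatives of $\log J$, and should be kept at hand throughout.

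Next I would split the verification block-by-block according to which Gothic indices are $0$ and which are Roman, handling each of the three cases in the statement separately. For $\GGamma^{k}_{ij} = \Pi^{k}_{ij}$ the key point is that $\Pi^{k}_{ij}$ already fails to transform as a tensor exactly by a Schwarzian-type term (this is the super-Schwarzian $\mathfrak{S}^{k}_{ij}$ recorded earlier), and this anomaly must be matched by the inhomogeneous piece in the transformation of a genuine linear connection together with contributions from the mixed coefficients $\GGamma^{\c}_{0\a}$. For the mixed coefficients $\GGamma^{\c}_{0\a} = \GGamma^{\c}_{\a 0} = \frac{-\delta^{\c}_{\a}}{n-m+1}$, I expect the verification to reduce to checking that this constant multiple of the identity is compatible with the shift $\bar{x}^{0} = x^{0} + \log J$; the factor $\frac{1}{n-m+1}$ is forced to be the same constant already appearing in the super-trace projection defining $\Pi^{k}_{ij}$, so the bookkeeping should close. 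The $\GGamma^{0}_{ji}$ component is the genuinely delicate one, since it is quadratic in $\Pi$ and involves $\partial_{q}\Pi^{q}_{ji}$; here I would compute how $\partial_{q}\Pi^{q}_{ji} - \Pi^{p}_{qj}\Pi^{q}_{pi}$ transforms and confirm that its anomaly is exactly the one needed to cancel the failure of $\GGamma^{0}_{ji}$ to be tensorial under the full extended coordinate change.

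Throughout, the main technical burden is the careful tracking of the Koszul sign factors $(-1)^{\i},(-1)^{\j},(-1)^{\k}$ and their products: every contraction over a repeated index, every transposition of adjacent symbols, and every application of the $\log J$ identity introduces parity-dependent signs, and the entire claim is really the assertion that all these signs conspire correctly. I would organise this by fixing once and for all the index-ordering conventions from the footnote (vectors $v^{i}e_{i}$, covectors $e^{i}v_{i}$, parity $\tilde{X}$), and by checking the even-dimensional reductions first: setting $m=0$ must recover Thomas' classical formulas from \cite{bialopap,me2}, which serves as a sanity check on the signs and on the replacement of $n+1$ by $n-m+1$ and $n-1$ by $n-m-1$.

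The hard part will be the $\GGamma^{0}_{ji}$ verification, because it combines the quadratic nonlinearity in $\Pi$ with second-order derivative terms, so both the inhomogeneous transformation of $\partial_{q}\Pi^{q}_{ji}$ and the cross-terms in $\Pi^{p}_{qj}\Pi^{q}_{pi}$ must be expanded and seen to cancel against each other and against the $\log J$ contributions; this is where the dimensional factor $\frac{n-m+1}{n-m-1}$ is pinned down, and where a sign error is most likely to hide. I would therefore carry out this computation last, after the simpler blocks have fixed all conventions, and cross-check the final cancellation against the known classical ($m=0$) Thomas connection.
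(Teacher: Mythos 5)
Your strategy---verify block by block that the coefficients obey the (super) transformation law of a linear connection under the extended coordinate change $\bar{x}^{i}=\bar{x}^{i}(x^{j})$, $\bar{x}^{0}=x^{0}+\log J$, using the identity $\partial_{i}\log J = \frac{\partial^{2}\bar{x}^{\sigma}}{\partial x^{i}\partial x^{k}}\frac{\partial x^{k}}{\partial\bar{x}^{\sigma}}(-1)^{\k}$ to absorb the inhomogeneous terms---is the right one, and indeed the only reasonable one; the paper itself gives no proof of this theorem, deferring to the classical Thomas construction in its references, where exactly this direct verification is carried out. You have also correctly identified the mechanism in the Roman block: since $\frac{\partial x^{0}}{\partial\bar{x}^{i}}=-\partial_{\bar{\imath}}\log J\neq 0$, the terms $\GGamma^{k}_{0j}$, $\GGamma^{k}_{j0}$ contribute to $\bar{\Gamma}^{k}_{ij}$ and must cancel the Schwarzian anomaly of $\Pi^{k}_{ij}$, which is what forces the coefficient $\frac{-1}{n-m+1}$.

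The gap is that the proposal is a roadmap, not a proof. For a statement of this kind the proof \emph{is} the computation, and every place where the work actually happens is deferred: you say the mixed-block bookkeeping ``should close,'' that the signs must ``conspire correctly,'' and you explicitly postpone the $\GGamma^{0}_{ji}$ verification, which is the only part where the formula $\frac{n-m+1}{n-m-1}\left(\partial_{q}\Pi^{q}_{ji}-\Pi^{p}_{qj}\Pi^{q}_{pi}\right)(-1)^{\q(1+\i+\j)}$ and in particular the factor $\frac{n-m+1}{n-m-1}$ could be confirmed or refuted. Nothing in the proposal would detect an error in that coefficient or in the sign exponent $\q(1+\i+\j)$. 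Two further cautions: your proposed sanity check ($m=0$ recovering the classical Thomas connection) is blind to sign errors in the odd--odd and odd--even sectors, so it cannot substitute for the super computation; and you should state explicitly the graded transformation law and graded symmetry $\Gamma^{\c}_{\a\b}=(-1)^{\aa\bb}\Gamma^{\c}_{\b\a}$ you are verifying against, since without fixing that convention the claim that ``all the signs conspire'' is not even well posed. To close the argument you must actually expand the transformation of $\partial_{q}\Pi^{q}_{ji}-\Pi^{p}_{qj}\Pi^{q}_{pi}$ and exhibit the cancellation against the $\log J$ terms coming from $\bar{x}^{0}=x^{0}+\log J$.
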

As in the purely even case, $\MM\to M$ is the oriented frame bundle of the Berezinian bundle.  A density $\phi(x)|Dx|^{\lambda}$ can be viewed as a function $\phi(x)e^{\lambda x^{0}}$ on $\MM$, $\V(M)$ consequently being a subalgebra of $C^{\infty}(\MM)$.  The weight operator corresponds to the vector field $\frac{\partial}{\partial x^{0}}$ (see \cite{bialopap,me2} for the classical case).

Using this new version of Thomas's construction, super versions of the main theorems of \cite{me2} can be proved.  These are summarised in the next theorem.
\begin{theorem}
Let $M$ be an $n|m$-dimensional supermanifold with a projective class $\Pi^{k}_{ji}$.
\begin{enumerate}
\item  For $(n-m)\notin\{1, -1, -2\}$ the formulae below define a projective class $\PPi^{\c}_{\a\b}$ on $\MM$.
\begin{equation}
\begin{split}
\PPi^{k}_{jk} = \Pi^{k}_{ji},&\quad\quad \PPi^{k}_{j0} = \PPi^{k}_{0j} = \frac{\delta^{k}_{j}}{(n-m+1)(n-m+2)},\\
\PPi^{0}_{i0} = \PPi^{0}_{0i} = 0,&\quad\quad \PPi^{0}_{ji} =\frac{n-m+1}{n-m-1}\left(\partial_{q}\Pi^{q}_{ji} - \Pi^{p}_{qj}\Pi^{q}_{pi}\right)(-1)^{\q(1+\i+\j)},\\
\PPi^{k}_{00} = 0,&\quad\quad\PPi^{0}_{00} = \frac{n-m}{(n-m+1)(n-m+2)}.
\end{split}
\end{equation}
\item  For $(n-m)\notin\{1, -1, -4\}$ Let $\V(M)$ be endowed with a bracket defined by a triple $(S^{ij},\gamma^{i},\theta)$ as per (\ref{bracketcomponents}).  Then there is a canonical generating operator for this bracket given by
\begin{equation}
\begin{split}\Delta = e^{\lambda x^{0}}&\left(S^{ij}\partial_{j}\partial_{i} + 2\gamma^{i}\partial_{i}\partial_{0} + \theta\partial_{0}^{2}\vphantom{\frac{1^{4}}{2}}\right.\\ + &\left(\frac{2}{\N+1}\partial_{j}S^{ji}(-1)^{\j(\SS + 1)} + \frac{2(\lambda(\N+1) + 1)}{(\N+1)(\N+4)}\gamma^{i} - \frac{\N+2}{\N+4}S^{jk}\Pi^{i}_{kj}\right)\partial_{i}\\ +&\left.\left(\frac{2}{\N+4}\partial_{k}\gamma^{k}(-1)^{\k(\SS + 1)} + \frac{2\lambda(\N+1) - (\N)}{(\N+1)(\N+4)}\theta - \frac{(\N+2)}{(\N+4)}S^{jk}B_{kj}\right)\partial_{0}\right)\\
\end{split}
\end{equation}
where $B_{kj} = \frac{\N + 1}{\N-1}\left(\partial_{q}\Pi^{q}_{kj} + \Pi^{p}_{qk}\Pi^{q}_{pj}\right)(-1)^{\q(1+\i+\j)}$ and $\N = n-m$.
\end{enumerate}
\end{theorem}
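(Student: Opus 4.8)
The plan is to deduce both statements from the super Thomas construction by treating $\MM$ as an $(n+1)|m$-dimensional supermanifold in its own right and applying to it the trace-free projection (\ref{supercoeffs}) and the super projective Laplacian of Theorem \ref{supersquare}. The organising observation is that every formula valid on $M$ of super dimension $\N=n-m$ reappears on $\MM$ with $\N$ replaced by the super dimension of $\MM$, namely $(n+1)-m=\N+1$. This single shift already predicts the characteristic denominators: $\N+2$ in part (1) (from $(\N+1)+1$) and $\N+4$ in part (2) (from $(\N+1)+3$).

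For part (1) I would take $\PPi^{\c}_{\a\b}$ to be, by definition, the trace-free part in the sense of (\ref{supercoeffs}) of the linear connection $\GGamma^{\c}_{\a\b}$ on $\MM$ furnished by the preceding theorem. Because $\MM$ has super dimension $\N+1$, the projection onto the trace-free part divides by $(\N+1)+1=\N+2$, which is exactly where the exclusion $\N\neq-2$ enters; the remaining exclusions $\N\neq1$ and $\N\neq-1$ are inherited directly from the coefficients of $\GGamma$ (the factor $\frac{\N+1}{\N-1}$ in $\GGamma^{0}_{ji}$ and the factor $\frac{1}{\N+1}$ in $\GGamma^{\c}_{0\a}$). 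The actual work is to compute the partial super traces $\GGamma^{\s}_{\a\s}$, now summing the repeated Gothic index $\s$ over the full range $0,\dots,n+m$ and tracking the parity signs, for each of the index types $(ij)$, $(i0)$ and $(00)$, and then to read off that the trace-free part agrees componentwise with the displayed $\PPi^{\c}_{\a\b}$. That the resulting $\PPi$ is genuinely a projective class requires no separate argument, since by construction it is the trace-free representative of an honest linear connection.

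For part (2) I would first record that $\V(M)$ embeds as a subalgebra of $C^{\infty}(\MM)$ via $\phi(x)|Dx|^{\lambda}\mapsto\phi(x)e^{\lambda x^{0}}$, with the weight operator $w$ realised as $\partial_{0}$, and that a weight-$\lambda$ bracket with triple $(S^{ij},\gamma^{i},\theta)$ as in (\ref{bracketcomponents}) is precisely the bracket on $\MM$ determined by the single weight-$\lambda$ symmetric tensor $\hat S^{\a\b}$ whose blocks are $\hat S^{ij}=S^{ij}$, $\hat S^{i0}=\hat S^{0i}=\gamma^{i}$ and $\hat S^{00}=\theta$ (the weight being carried by an overall $e^{\lambda x^{0}}$). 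Applying Theorem \ref{supersquare} to $\hat S^{\a\b}$ on $\MM$, equipped with the projective class $\PPi^{\c}_{\a\b}$ of part (1), produces a canonical invariant second-order operator on $C^{\infty}(\MM)$; by the first part of that theorem this operator generates the bracket $F,G\mapsto\hat S^{\a\b}\partial_{\a}F\,\partial_{\b}G$, and hence, after restriction to $\V(M)$ and the identification $\partial_{0}=w$, generates the original bracket. Since $\hat S^{\a\b}$ carries the factor $e^{\lambda x^{0}}$, the operator $\partial_{0}$ differentiates this factor and thereby injects the weight $\lambda$ into the coefficients, which accounts for all the $\lambda$-dependent terms in the displayed $\Delta$. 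Specialising the coefficients of (\ref{superprojectivelaplacian}) with $\N\rightsquigarrow\N+1$ and the index range extended to include $0$ then yields the stated formula, the exclusions $\N\notin\{1,-1,-4\}$ coming respectively from the factor in $B_{kj}$, from the coefficients of $\PPi$, and from the $\N+3\rightsquigarrow\N+4$ shift.

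The main obstacle is the bookkeeping in this last specialisation, concretely the contraction $\hat S^{\b\c}\PPi^{\a}_{\c\b}$ summed over $0,\dots,n+m$. One must split it by the value of the free index $\a$: for $\a=i$ the contributions from $\hat S^{jk}\PPi^{i}_{kj}$, $\hat S^{j0}\PPi^{i}_{0j}$ and $\hat S^{00}\PPi^{i}_{00}$ are collected into the coefficient of $\partial_{i}$, and for $\a=0$ the contributions $\hat S^{jk}\PPi^{0}_{kj}$, $\hat S^{j0}\PPi^{0}_{0j}$ and $\hat S^{00}\PPi^{0}_{00}$ into the coefficient of $\partial_{0}$, with the parity signs attached to each index transposition carried throughout. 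The genuinely delicate point is that the $\partial_{0}$-coefficient is expressed through $B_{kj}$, which carries the quadratic combination $\Pi^{p}_{qk}\Pi^{q}_{pj}$ with the opposite sign to the one appearing in $\PPi^{0}_{kj}$; reconciling the two requires using the super-symmetry of $\hat S$ together with the symmetry of the lower indices of $\Pi$ to reorder the contraction $\hat S^{jk}\PPi^{0}_{kj}$, and this is exactly where the parity signs must be handled most carefully. Once this reordering and the sign tracking are complete, the remaining collection of terms is routine, and checking self-adjointness and $\Delta(1)=0$ against the characterisation of Theorem \ref{Koszul} furnishes an independent verification.
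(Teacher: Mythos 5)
Your proposal follows exactly the route the paper takes: part (1) is obtained by passing from the Thomas lift $\GGamma^{\c}_{\a\b}$ on $\MM$ to its projective class (with the super dimension shifted from $n-m$ to $n-m+1$, whence the denominators $\N+2$), and part (2) by applying the projective Laplacian of Theorem \ref{supersquare} on $\MM$ to the bracket on $\V(M)$ viewed as a bracket on $\MM$ with the projective class just constructed. The paper's own proof is only a two-sentence sketch of this same strategy, so your more detailed account of the trace computations and the contraction $\hat S^{\b\c}\PPi^{\a}_{\c\b}$ is simply a fuller version of the identical argument.
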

\begin{proof}The first result is obtained by taking the projective class of the linear connection on $\MM$ given by the Thomas construction.  The second follows by then applying the formula for the projective Laplacian to $\MM$ with the bracket on $\V(M)$ considered as a bracket on $\MM$ and the projective class just constructed.
\end{proof}
\begin{remark}In the classical case, the dimension of the manifold had to be greater than one for the formula to have no singularities.  In the super context, $n-m$ can be negative leading to more singular cases.  The precise implications of these values are yet to be investigated.
\end{remark}
We suppose that this operator is self adjoint and therefore equal to the canonical operator (\ref{THsupercanonical}).  This yields corresponding expressions
\begin{equation}
\begin{split}
\gamma^{i} = \frac{n-m+1}{(n-m+3) - \lambda(n-m+1)}\left(\partial_{j}S^{ij}(-1)^{\j(\SS +1)} + S^{jk}\Pi_{kj}^{i}\right)\\
\theta = \frac{n-m+1}{(n-m+2) - \lambda(n-m+1)}\left(\partial_{k}\gamma^{k}(-1)^{\k(\SS + 1)} - S^{kj}B_{jk}\right)
\end{split}
\end{equation}
These observations lead to the super analogue of Theorem \ref{main} and Theorem \ref{bracketextension}.
\begin{theorem}
\label{super main}Let $M$ be an $n|m$-dimensional supermanifold for $n-m \neq 1,-1,-2,-4$ equipped with a projective class and a tensor density $S^{ij}$ of weight $\lambda\neq\frac{n-m+2}{n-m+1},\frac{n-m+3}{n-m+1}$.  There is a canonical second order differential operator of weight $\lambda$ acting on densities extending $S^{ij}$.
\end{theorem}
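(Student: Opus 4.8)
The plan is to reduce Theorem \ref{super main} to the Thomas construction together with Theorem \ref{supersquare}, exactly mirroring the classical argument of \cite{me2} but now carrying parity signs. First I would observe that a tensor density $S^{ij}$ of weight $\lambda$ on $M$ corresponds to the genuine tensor field $\widehat{S}^{ij} := S^{ij}e^{\lambda x^{0}}$ on the Thomas manifold $\MM$, using the identification $\phi(x)|Dx|^{\lambda} \leftrightarrow \phi(x)e^{\lambda x^{0}}$ and the fact that $|Dx|$ pulls back to the $x^{0}$-direction. The point is that the extra coordinate $x^{0}$ trivialises the density weight: transforming $S^{ij}$ with a Berezinian factor $J^{\lambda}$ becomes, on $\MM$, the ordinary tensorial transformation of $\widehat{S}^{ij}$ absorbed into the shift $\bar{x}^{0} = x^{0} + \log J$. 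So I would first make this correspondence precise and check that $\widehat{S}^{ij}$ is an honest symmetric tensor on the $(n+1)|m$-dimensional supermanifold $\MM$.

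Next, having the projective class $\PPi^{\c}_{\a\b}$ on $\MM$ supplied by the preceding theorem (valid for $n-m \notin \{1,-1,-2\}$), I would apply Theorem \ref{supersquare} to the pair $(\widehat{S}, \PPi)$ on $\MM$. This produces an invariant second order operator on $C^{\infty}(\MM)$ of the shape \eqref{superprojectivelaplacian}, now with the dimension parameter equal to $(n+1)-m = (n-m)+1$ in place of $n-m$. Since $\V(M) \subseteq C^{\infty}(\MM)$ is a subalgebra and the weight operator is $\partial_{0}$, this operator descends to a well-defined operator on densities. The key computation is then to verify that the resulting operator is homogeneous of weight $\lambda$, i.e. commutes appropriately with $\partial_{0}$; this follows because $\widehat{S}^{ij}$ scales as $e^{\lambda x^{0}}$ and the coefficients of \eqref{superprojectivelaplacian} are built from $\widehat{S}$ and the $x^{0}$-independent part of $\PPi$, so every term carries a uniform factor $e^{\lambda x^{0}}$. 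Restricting to the $0$-graded slice recovers $S^{ij}$, establishing the extension property.

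The main obstacle is the bookkeeping of the denominators and the attendant excluded weights. Substituting the block form of $\PPi^{\c}_{\a\b}$ into \eqref{superprojectivelaplacian} and separating Gothic indices into the $x^{0}$-component and the Roman components, one finds the coefficient of $\partial_{0}$ contains a factor whose vanishing must be avoided; tracking this is precisely where the forbidden values $\lambda = \frac{n-m+2}{n-m+1}, \frac{n-m+3}{n-m+1}$ and the excluded dimensions $n-m = 1,-1,-2,-4$ enter. I expect the $n-m = -4$ exclusion, which has no classical analogue, to arise from the $(\N+4)$ denominators appearing in the descended operator — these come from applying the $\frac{1}{(n-m)+3}$ prefactor of Theorem \ref{supersquare} with $n-m$ shifted up by one, so that $\frac{1}{((n-m)+1)+3} = \frac{1}{\N+4}$. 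The parity signs $(-1)^{\j(\SS+1)}$ and $(-1)^{\i\j}$ must be propagated consistently through the divergence term $\partial_{j}\widehat{S}^{ij}$ and the contraction $\widehat{S}^{jk}\PPi^{i}_{kj}$; I would fix the sign conventions once, at the level of the super trace $\div$, and then the signs in the final operator are forced. The remaining verification that the two forbidden weights $\lambda$ are exactly the values making the weight-$\lambda$ operator degenerate is a direct inspection of the coefficient formulae for $\gamma^{i}$ and $\theta$ displayed just before the statement, whose denominators $(n-m+3)-\lambda(n-m+1)$ and $(n-m+2)-\lambda(n-m+1)$ vanish at precisely those values.
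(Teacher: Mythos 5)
Your overall strategy --- pass to the Thomas manifold $\MM$, take the projective class $\PPi^{\c}_{\a\b}$ supplied by the preceding theorem, and apply the projective Laplacian there --- is indeed the route the paper takes. But there is a genuine gap at your very first step, and it sits exactly where the real content of the theorem lives. The assignment $\widehat{S}^{ij} := S^{ij}e^{\lambda x^{0}}$, with the unstated convention $\widehat{S}^{0\a}=0$, does \emph{not} define a tensor on $\MM$. Under $\bar{x}^{0}=x^{0}+\log J$ the Jacobian of the coordinate change on $\MM$ has the off-diagonal entries $\partial\bar{x}^{0}/\partial x^{k}=\partial_{k}\log J$, so the components of a genuine upper-index tensor mix according to $\bar{T}^{0j} = (\partial_{k}\log J)\frac{\partial\bar{x}^{j}}{\partial x^{l}}T^{kl} + \frac{\partial\bar{x}^{j}}{\partial x^{l}}T^{0l}$ (up to parity signs): setting $T^{0l}=0$ in one chart forces $\bar{T}^{0j}\neq 0$ in another. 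To promote $S^{ij}$ to a tensor on $\MM$ --- equivalently, to a bracket on $\V(M)$ --- you must supply the extra components $\gamma^{i}$ and $\theta$ of (\ref{bracketcomponents}), which transform inhomogeneously, like an upper connection. Applying Theorem \ref{supersquare} to your $\widehat{S}$ as defined therefore produces a coordinate-dependent expression, not an invariant operator.

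The missing idea is how $\gamma^{i}$ and $\theta$ are canonically manufactured from $S^{ij}$ and $\Pi^{k}_{ij}$, and this is also where the forbidden weights actually enter. The paper's route is: for an \emph{arbitrary} completion $(S^{ij},\gamma^{i},\theta)$, the Thomas construction plus the projective Laplacian on $\MM$ yields one generating operator for the bracket, while (\ref{THsupercanonical}) is the unique self-adjoint constant-free one; demanding that these coincide gives linear equations for $\gamma^{i}$ and $\theta$ whose solutions are the displayed formulae with denominators $(n-m+3)-\lambda(n-m+1)$ and $(n-m+2)-\lambda(n-m+1)$. It is the solvability of these equations --- not the coefficient of $\partial_{0}$ in your descended operator, which carries no $\lambda$-dependent denominator --- that excludes $\lambda=\frac{n-m+2}{n-m+1}$ and $\lambda=\frac{n-m+3}{n-m+1}$. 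You do cite these formulae at the end, but in your construction $\gamma^{i}$ and $\theta$ never appear, so nothing in your argument produces them or explains why those two weights are special. Your identification of the source of the $n-m=-4$ exclusion (the $\N+4$ denominators arising from the shift of $n-m$ by one on $\MM$) is correct as far as it goes.
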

\begin{corollary}\label{superbracketextension}
Let $M$ be an $n|m$-dimensional with $(n-m)\neq 1, -2, -3$ supermanifold equipped with a projective class.  Then every bracket on $M$ of can be extended canonically to a bracket on $\V(M)$.
\end{corollary}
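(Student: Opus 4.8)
The plan is to reduce the extension problem to the manufacture of two components and then to read off the admissible dimensions from the poles of those components. A bracket on $M$ is, by the dictionary recalled above, nothing but a symmetric tensor $S^{ij}$, which we regard as a tensor density of weight $\lambda=0$; since $C^{\infty}(M)=\V_{0}(M)$ sits inside $\V(M)$, extending this bracket means producing a weight-zero bracket on $\V(M)$ whose triple $(S^{ij},\gamma^{i},\theta)$ in the sense of (\ref{bracketcomponents}) has first component the given $S^{ij}$. Thus the entire task is to construct $\gamma^{i}$ and $\theta$ canonically out of $S^{ij}$ and the projective class $\Pi^{k}_{ij}$.

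First I would specialise to $\lambda=0$ the expressions for $\gamma^{i}$ and $\theta$ obtained just before Theorem \ref{super main}, namely $\gamma^{i}=\frac{n-m+1}{n-m+3}(\partial_{j}S^{ij}(-1)^{\j(\SS+1)}+S^{jk}\Pi^{i}_{kj})$ and $\theta=\frac{n-m+1}{n-m+2}(\partial_{k}\gamma^{k}(-1)^{\k(\SS+1)}-S^{kj}B_{jk})$, where $B_{kj}=\frac{n-m+1}{n-m-1}(\partial_{q}\Pi^{q}_{kj}+\Pi^{p}_{qk}\Pi^{q}_{pj})(-1)^{\q(1+\i+\j)}$; note that this $\gamma^{i}$ is exactly the upper volume connection of Theorem \ref{supersquare}. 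The only denominators appearing are $n-m+3$, $n-m+2$ and, inside $B$, $n-m-1$, so these formulae are finite precisely when $n-m\notin\{1,-2,-3\}$, which is the stated hypothesis. This already explains why the condition set is weaker than in Theorem \ref{super main}: the bracket is pinned down by $\gamma^{i}$ and $\theta$ alone, so the singularities at $n-m=-1,-4$, which enter only through the $n-m+4$ denominators of the generating operator and through the construction of $\PPi^{\c}_{\a\b}$ on $\MM$, never touch the bracket itself.

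Next I would verify that $(S^{ij},\gamma^{i},\theta)$ really are the components of a coordinate-independent symmetric biderivation on $\V(M)$ restricting to the original bracket. For $n-m\notin\{1,-1,-2,-4\}$ this is immediate: Theorem \ref{super main} at $\lambda=0$ supplies a canonical second order operator extending $S^{ij}$, equal to the self-adjoint operator (\ref{THsupercanonical}), and the bracket it generates has precisely these $\gamma^{i},\theta$; its restriction to $\V_{0}(M)$ has symbol $S^{ij}$, hence reproduces $\{f,g\}=S^{ij}\partial_{i}f\,\partial_{j}g$. To reach the remaining admissible values $n-m=-4$ and $n-m=-1$, which are allowed here but excluded from Theorem \ref{super main}, I would argue by rational continuation: after clearing $n-m+3$, $n-m+2$ and $n-m-1$, the statement that $(S^{ij},\gamma^{i},\theta)$ patch into a bracket is a polynomial identity in the single parameter $n-m$ whose only possible poles are at $1,-2,-3$; holding on the infinite set $n-m\notin\{1,-1,-2,-4\}$, it holds wherever it is finite, in particular at $n-m=-4$, where $\gamma^{i}$ and $\theta$ remain perfectly regular.

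I expect the main obstacle to be exactly this reconciliation of the two condition sets at the anomalous value $n-m=-1$, where the clean generating-operator argument is unavailable and, more seriously, $\Pi^{k}_{ij}$ itself carries a $\frac{1}{n-m+1}$ pole; here I would have to treat the finite combinations $(n-m+1)\Pi$ that actually occur in $\gamma^{i},\theta$ as the primitive data and continue the identity through that point. The honest alternative to continuation is a direct check that $\gamma^{i}$ and $\theta$ transform as bracket components under $\bar{x}=\bar{x}(x)$, using the inhomogeneous transformation law of $\Pi^{k}_{ij}$ encoded by the super-Schwarzian $\mathfrak{S}^{k}_{ij}$: this is a bookkeeping computation in which the projective correction terms in $\gamma^{i},\theta$ are precisely what cancel the anomalous $\partial\log J$ contributions, and in which the sign factors must be carried carefully. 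Granting the invariance, the triple $(S^{ij},\gamma^{i},\theta)$ defines the asserted canonical extension, establishing the corollary.
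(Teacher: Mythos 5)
Your proposal follows essentially the same route as the paper: the corollary is obtained by specialising the displayed formulae for $\gamma^{i}$ and $\theta$ (just before Theorem \ref{super main}) to $\lambda=0$, so that the triple $(S^{ij},\gamma^{i},\theta)$ furnishes the canonical extension, with the excluded values $n-m=1,-2,-3$ read off from the denominators $n-m-1$, $n-m+2$, $n-m+3$. Your additional discussion of the anomalous values $n-m=-1,-4$ is more careful than the paper, which passes over this discrepancy in silence, but it does not change the substance of the argument.
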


\section{Applications to Odd Poisson and BV-structures}
All the brackets above have been symmetric.  This immediately precludes the possibility of applying these constructions to give extensions and generating operators for Poisson brackets.  However, in the case of supermanifolds, symmetric brackets are related to odd Poisson structures.  Much of the explanation that follows is taken from \cite{oloi,oloii,Graded}.  From now, all objects considered are implicitly assumed to be `super'.
\begin{definition}
Let $A$ be an associative algebra.  An \emph{odd Poisson structure} (or \emph{Schouten structure}) on $A$ is an odd bilinear map $[\,\cdot\, ,\,\cdot\,]:A\times A \to A$ satisfying
\begin{eqnarray}
&[a,b] = -(-1)^{(\aa+1)(\bb + 1)}[b,a]&\label{shiftedantisymmetry}\\
&[a,[b,c]] = [[a,b],c] + (-1)^{(\aa + 1)(\bb + 1)}[b,[a,c]]\label{Jacobi}&\\
&[a,bc] = [a,b]c + (-1)^{(\aa + 1)\bb}b[a,c]\label{derivation}&
\end{eqnarray}
If there is an operator $\Delta:A\to A$ such that
\begin{equation}[a,b] = (-1)^{\aa}\left(\Delta(ab) - \Delta(f)g - (-1)^{\aa}a\Delta(b)\right)\label{BVgen}\end{equation}
and such that $\Delta^{2} = 0$, then $(A,[\,\cdot\, ,\,\cdot\,], \Delta)$ is called a \emph{Batalin-Vilkovisky algebra}.
\end{definition}

At first glance, the brackets considered in Definition \ref{bracketdefn}, being symmetric, have no hope of defining odd Poisson structures.  Consider however an odd symmetric bracket $\{\,\cdot\,,\,\cdot\,\}$.  Defining $[a,b]:= (-1)^{\aa}\{a,b\}$, we see that $[\,\cdot\, ,\,\cdot\,]$ satisfies property (\ref{shiftedantisymmetry}) above.  Since $\{\,\cdot\,,\,\cdot\,\}$ is a biderivation, $[\,\cdot\, ,\,\cdot\,]$ satisfies (\ref{derivation}) automatically.  The only condition therefore for $[\,\cdot\, ,\,\cdot\,]$ to define an odd Poisson structure is the Jacobi identity with respect to shifted parity (\ref{Jacobi}).

Consider now a bracket on $C^{\infty}(M)$ specified by a master Hamiltonian $S = S^{ab}p_{b}p_{a}\in C^{\infty}(T{*}M)$.  By this it is meant that
\begin{equation}
\{f,g\} = ((S,f),g) = S^{ab}\partial_{b}f\partial_{a}g(-1)^{\aa(\f + 1)}
\end{equation}
where $(\,\cdot\, ,\,\cdot\,)$ is the canonical even Poisson bracket on $T^{*}M$.  Indeed, it is well known that the Jacobi identity (\ref{Jacobi}) is satisfied by $[a,b] := (-1)^{\aa}\{a,b\}$ if and only if
\begin{equation}
(S,S) = 0.
\end{equation}
This condition of course only has moment for odd brackets - for even brackets, it becomes vacuous.  Given this, we investigate what can be said about manifolds endowed with projective classes.

\subsection{Projective Laplacian as a Batalin-Vilkovisky operator}
Let $M$ be an odd Poisson manifold with bracket $[\,\cdot\,,\,\cdot\,]$ and define the odd symmetric bracket $\{a,b\} = (-1)^{\aa}[a,b]$ as above.  The condition that a constant free operator $\Delta$ on $C^{\infty}(M)$ satisfies (\ref{BVgen}) is the same as requiring that is generate a the bracket $\{\,\cdot\,,\,\cdot\,\}$ in the usual sense.  Suppose now that $M$ has a projective class.  Then by Theorem \ref{supersquare} there is a projective Laplacian associated with $\{\,\cdot\,,\,\cdot\,\}$  which satisfies (\ref{BVgen}).

The algebra of functions $C^{\infty}(M)$ is then a Batalin-Vilkovisky algebra if and only if $\Delta^{2} = 0$.  This condition can be explored, as in \cite{oloii}, by considering the order of $\Delta^{2}$ as a differential operator: as $\Delta$ is constant free, $\ord\Delta^{2} = 0 \Leftrightarrow \Delta^{2} = 0$.  Since $\ord\Delta\le 2$, clearly $\ord\Delta^{2}\le 4$.  By virtue of the fact $\Delta$ is odd, in fact, $\ord\Delta^{2}\le 3$.  The bracket $[\,\cdot\,,\,\cdot\,]$ satisfying the Jacobi identity is equivalent to $\ord \Delta^{2}\le 2$.  There are two final conditions for the operator to square to zero.  For the projective Laplacian (\ref{superprojectivelaplacian}), the aforementioned translate into the following theorem.

\begin{theorem}Let $M$ be an $n|m$-dimensional supermanifold with a projective class $\Pi^{k}_{ij}$ and an odd Poisson bracket of functions $[f,g] = (-1)^{\f}((S,f),g)$ defined by a master Hamiltonian $S^{ij}p_{j}p_{i}$.  Then the projective Laplacian is a Batalin-Vilkovisky operator if and only if
\begin{equation}\label{BVformulae}
\begin{split}
S^{jk}\partial_{k}\partial_{j} + T^{j}\partial_{j}T^{i} &= 0\\
S^{kl}\partial_{l}\partial_{k}S^{ij} + T^{k}\partial_{k}S^{ij} +S^{ik}\partial_{k}T^{j}(-1)^{j} + S^{jk}\partial_{k}T^{i}(-1)^{\i(\j+1)} &= 0,
\end{split}
\end{equation}
where \[T^{i} = \frac{2}{n-m+3}\partial_{j}S^{ji} - \frac{n-m+1}{n-m+3}S^{jk}\Pi_{kj}^{i}.\]
\end{theorem}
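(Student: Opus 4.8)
The plan is to study $\Delta^{2}$ through the filtration by differential-operator order, using the reductions already isolated in the discussion preceding the statement. By Theorem \ref{supersquare} the operator $\Delta = S^{ij}\partial_{j}\partial_{i} + T^{i}\partial_{i}$ is the projective Laplacian attached to the odd symmetric bracket $\{a,b\} = (-1)^{\aa}[a,b]$, and as noted there it satisfies the generation formula (\ref{BVgen}); hence $(C^{\infty}(M),[\,\cdot\,,\,\cdot\,],\Delta)$ is a Batalin--Vilkovisky algebra precisely when $\Delta^{2}=0$. Since $\Delta$ is constant-free, so is $\Delta^{2}$, and a constant-free operator of algebraic order zero vanishes identically; it therefore suffices to drive $\ord\Delta^{2}$ to zero and identify the resulting conditions with (\ref{BVformulae}).

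First I would invoke the order reductions already recorded. The a priori bound is $\ord\Delta^{2}\le 4$, with order-four symbol the square of the principal symbol $S^{ij}p_{j}p_{i}$; this symbol is odd, and odd elements of the supercommutative algebra $C^{\infty}(T^{*}M)$ square to zero, giving $\ord\Delta^{2}\le 3$. The order-three part reassembles into the master-equation quantity $(S,S)$, whose vanishing is equivalent to the Jacobi identity (\ref{Jacobi}); this is exactly the hypothesis that $[\,\cdot\,,\,\cdot\,]$ be odd Poisson, so $\ord\Delta^{2}\le 2$ holds automatically. The genuine content of the theorem is thus the vanishing of the order-two and order-one parts of $\Delta^{2}$.

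The heart of the argument is the explicit graded expansion of $\Delta(\Delta f)$. Collecting the coefficient of $\partial_{j}\partial_{i}f$, the surviving contributions are the second-order part of $\Delta$ applied to the coefficient $S^{ij}$, namely $S^{kl}\partial_{l}\partial_{k}S^{ij}$, the transport term $T^{k}\partial_{k}S^{ij}$, and the two cross terms in which the leading part of one factor meets the subleading part of the other, yielding $S^{ik}\partial_{k}T^{j}$ and $S^{jk}\partial_{k}T^{i}$ with their Koszul signs; equating this to zero yields the second equation of (\ref{BVformulae}). Similarly, the coefficient of $\partial_{i}f$ assembles into $\Delta(T^{i}) = S^{jk}\partial_{k}\partial_{j}T^{i} + T^{j}\partial_{j}T^{i}$, whose vanishing is the first equation of (\ref{BVformulae}); the order-zero part is absent because $\Delta^{2}$ is constant-free. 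Matching these coefficients to the displayed formulae completes the equivalence.

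I expect the main obstacle to be purely a matter of graded bookkeeping: carrying the expansion of $\Delta^{2}$ through the Leibniz rule while tracking every sign $(-1)^{\aa\cdots}$ induced by moving odd derivatives past the odd coefficients $S$ and $T$. In particular, pinning down the signs $(-1)^{j}$ and $(-1)^{\i(\j+1)}$ on the cross terms, and checking that the order-three terms collapse cleanly onto $(S,S)$ rather than leaving a residue, are the delicate points; the remainder is a routine, if lengthy, calculation.
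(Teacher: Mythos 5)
Your proposal follows exactly the paper's own route: the paper's entire justification is the order-filtration discussion immediately preceding the theorem ($\ord\Delta^{2}\le 4$, reduced to $3$ by oddness of $\Delta$, to $2$ by the Jacobi identity, leaving two final conditions identified with the vanishing of the order-two and order-one parts), and you reproduce and flesh out precisely that argument. The explicit coefficient extraction you outline (including the cancellation of the $T^{k}T^{i}\partial_{k}\partial_{i}$ term by oddness) is left implicit in the paper, so nothing in your plan diverges from it.
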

\subsection{Projective Extensions of odd Poisson brackets}
\begin{theorem}\label{oloiibrackext}(\cite{oloii})Let $M$ be a manifold with an odd bracket of weight $0$ on densities specified by a triple $(S^{ij},\gamma^{i},\theta)$ as in (\ref{bracketcomponents}).  The associated antisymmetric bracket satisfies the Jacobi identity if and only if
\begin{eqnarray}
(S,S) = 0,\label{brackfirst}\\
(S,\gamma) = 0\label{bracksecond},\\
(S,\theta) +(\gamma,\gamma) = 0,\label{brackthird}\\
(\gamma,\theta) = 0\label{brackfourth}
\end{eqnarray}
where $S = S^{ij}p_{j}p_{i}$, $\gamma = \gamma^{i}p_{i}$ are components of the master Hamiltonian.
\end{theorem}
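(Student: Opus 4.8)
The plan is to transport the bracket on $\V(M)$ to a bracket on $C^{\infty}(\MM)$, where $\MM$ is the extended manifold carrying the extra even coordinate $x^{0}$, and then to recognise it as the bracket generated by a single master Hamiltonian $\mathcal{S}$ on $T^{*}\MM$. The shifted Jacobi identity will then collapse to the classical master equation $(\mathcal{S},\mathcal{S}) = 0$, which I expand in powers of the extra momentum $p_{0}$ conjugate to $x^{0}$; matching the coefficients of the various powers of $p_{0}$ to zero is what produces the four conditions.

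First I would use the identification $\phi(x)|Dx|^{\lambda}\mapsto\phi(x)e^{\lambda x^{0}}$, under which $\V(M)$ sits inside $C^{\infty}(\MM)$ and the weight operator becomes $\partial_{0}$. Writing $p_{\alpha}$ ($\alpha = 0,1,\dots,n+m$) for the fibre coordinates on $T^{*}\MM$, with $p_{0}$ conjugate to the even coordinate $x^{0}$, I claim the weight-$0$ bracket with triple $(S^{ij},\gamma^{i},\theta)$ is exactly the bracket $[a,b] = (-1)^{\aa}((\mathcal{S},a),b)$ generated by
\[
\mathcal{S} \;=\; S \;+\; \gamma\,p_{0} \;+\; \tfrac{1}{2}\,\theta\,p_{0}^{2}, \qquad S = S^{ij}p_{j}p_{i},\ \ \gamma = \gamma^{i}p_{i}.
\]
Verifying this amounts to evaluating both sides on the algebra generators $x^{i}$ and $|Dx|\leftrightarrow e^{x^{0}}$ and matching against (\ref{bracketcomponents}); the normalisation constants (in particular the factor $\tfrac{1}{2}$ on $\theta$ and the relative weight of the $\gamma p_{0}$ term) are fixed precisely by this matching, and this is where the sign and constant bookkeeping has to be done carefully.

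With $\mathcal{S}$ in hand, the criterion recalled above for $C^{\infty}(M)$ --- that $[a,b] = (-1)^{\aa}((\mathcal{S},a),b)$ obeys the shifted Jacobi identity (\ref{Jacobi}) if and only if $(\mathcal{S},\mathcal{S}) = 0$ --- applies verbatim on $T^{*}\MM$ and reduces the whole statement to computing a single self-bracket. The essential simplification comes from the weight-$0$ hypothesis: the components $S^{ij}$, $\gamma^{i}$, $\theta$ are genuine tensor (density) fields on $M$, hence functions of $x$ alone with no $x^{0}$-dependence, so $p_{0}$ Poisson-commutes with $S$, $\gamma$, $\theta$ and with itself. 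Treating $p_{0}$ as a central even parameter, $(\mathcal{S},\mathcal{S})$ therefore expands as a polynomial in $p_{0}$ whose coefficients are canonical Poisson brackets on $T^{*}M$ of $S$, $\gamma$ and $\theta$.

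Finally I would collect coefficients by degree in $p_{0}$. Using that $S$, $\gamma$, $\theta$ are all odd, so that the even canonical bracket is symmetric on them, $(F,G) = (G,F)$, the coefficients of $p_{0}^{0},\dots,p_{0}^{4}$ become, up to nonzero constants, $(S,S)$, $(S,\gamma)$, $(S,\theta)+(\gamma,\gamma)$, $(\gamma,\theta)$ and $(\theta,\theta)$; the last vanishes identically since $\theta$ carries no momenta, and the simultaneous vanishing of the remaining four is exactly (\ref{brackfirst})--(\ref{brackfourth}). Since both directions flow from the single equivalence with $(\mathcal{S},\mathcal{S}) = 0$, the "if and only if" is automatic. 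I expect the main obstacle to lie not in this final expansion but in the middle step: pinning down the exact master Hamiltonian (its constants and signs) so that its self-bracket reproduces the stated relative weights, and confirming that the Jacobi identity on the subalgebra $\V(M)$ really is equivalent to the master equation --- for which it suffices that the generators $x^{i}$, $e^{x^{0}}$ and their products detect all components of the bracket.
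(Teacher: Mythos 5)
The paper gives no proof of this theorem --- it is imported verbatim from \cite{oloii} --- and your reconstruction is precisely the argument of that source: realise the weight-zero bracket on $\V(M)\subset C^{\infty}(\MM)$ as the bracket generated by a single fibrewise-quadratic master Hamiltonian $\mathcal{S}$ built from $S$, $\gamma p_{0}$ and $\theta p_{0}^{2}$ (constants fixed by matching against (\ref{bracketcomponents})), reduce the shifted Jacobi identity to $(\mathcal{S},\mathcal{S})=0$, and read off (\ref{brackfirst})--(\ref{brackfourth}) as the vanishing of the coefficients of $p_{0}^{0},\dots,p_{0}^{3}$, the $p_{0}^{4}$ coefficient $(\theta,\theta)$ vanishing identically since $\theta$ carries no momenta. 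The two points you flag as needing care --- the relative normalisation of the $\gamma p_{0}$ and $\theta p_{0}^{2}$ terms, which is what produces the relative coefficient $1$ in (\ref{brackthird}), and the fact that the Jacobiator is a triderivation and hence is detected on the generators $x^{i}$ and $|Dx|=e^{x^{0}}$ of $\V(M)$ --- are exactly the right ones, and both work out.
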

In the case of brackets of weight zero, $S^{ij}$ is a tensor and therefore defines a bracket on $M$.  The first condition is then equivalent to the Jacobi identity for the induced antisymmetric bracket on $M$.  It also implies that the operator $D = (S,\,\cdot\,)$ is a differential acting on functions on $T^{*}M$ (the `Lichernowicz operator').  The components $\gamma^{i}$ can be interpreted as the coefficients for an upper connection on the bundle of volume forms over the tensor $S^{ij}$.  The second condition then becomes a flatness condition for this differential.

In the special case of $S^{ij}$ being a non-degenerate matrix, condition (\ref{brackfourth}) is automatically satisfied and (\ref{brackfirst})-(\ref{brackthird}) can be rephrased as follows.
\begin{theorem}(\cite{oloii})Let $M$ be a manifold as in Theorem \ref{oloiibrackext} but in addition let $S^{ij}$ be non-degenerate.  Then the Jacobi identity for the associated antisymmetric bracket is equivalent to the conditions
\begin{enumerate}
\item The antisymmetric bracket on $C^{\infty}(M)$ makes $M$ an odd symplectic manifold.
\item For some local volume form $\rho$, $\gamma^{i} = -S^{ij}\partial_{j}\log \rho$.
\item We have $\theta = \gamma^{k}\gamma_{k}$, where $\gamma_{i} = -\partial_{i}\log\rho$ is the connection in the volume bundle associated with $\rho$.
\end{enumerate}
\end{theorem}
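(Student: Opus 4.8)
The plan is to start from the four equations of Theorem~\ref{oloiibrackext} — namely $(S,S)=0$, $(S,\gamma)=0$, $(S,\theta)+(\gamma,\gamma)=0$ and $(\gamma,\theta)=0$ — and to use the standing hypothesis that $S^{ij}$ is non-degenerate to decouple them into the three conditions of the statement, checking each direction of the equivalence. I would first dispose of \eqref{brackfirst}: since $S^{ij}$ is non-degenerate and odd, the biderivation $\{f,g\}=S^{ij}\partial_{j}f\,\partial_{i}g$ is a non-degenerate odd bracket, and $(S,S)=0$ is exactly the Jacobi identity for the associated antisymmetric bracket $[\,\cdot\,,\,\cdot\,]$. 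A non-degenerate odd Poisson bracket satisfying Jacobi is, by the odd analogue of Darboux's theorem, an odd symplectic structure. This identifies \eqref{brackfirst} with condition (1) and, crucially, furnishes local Darboux coordinates in which $S^{ij}$ is constant; this normal form is the device I would lean on to keep the later computations manageable.

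Next I would analyse \eqref{bracksecond}. Because $S$ is invertible I set $\gamma_{i}:=S_{ij}\gamma^{j}$, with $S_{ij}$ the inverse of $S^{ij}$, thereby converting the upper connection $\gamma^{i}$ on the volume bundle into an ordinary connection one-form $\gamma_{i}$. As recorded in the discussion following Theorem~\ref{oloiibrackext}, the equation $(S,\gamma)=0$ expresses that $\gamma$ is closed under the differential $D=(S,\,\cdot\,)$, which in the non-degenerate case is precisely the vanishing of the curvature of this connection; in the Darboux coordinates above, where $\partial_{k}S^{ij}=0$, it reduces to closedness of the one-form $\gamma_{i}dx^{i}$. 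The super Poincaré lemma then produces, locally, a potential, i.e.\ $\gamma_{i}=-\partial_{i}\log\rho$ for some local volume form $\rho$, equivalently $\gamma^{i}=-S^{ij}\partial_{j}\log\rho$. Conversely any such $\gamma$ is manifestly flat, so \eqref{bracksecond} is equivalent to condition (2).

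With (1) and (2) in hand I would solve \eqref{brackthird} for $\theta$. Regarding $\theta$ as a function on $M$, the map $\theta\mapsto(S,\theta)$ is, up to sign, $S^{\sharp}(d\theta)$, and non-degeneracy of $S$ makes it injective modulo locally constant functions; hence, once $\gamma$ is fixed, \eqref{brackthird} determines $\theta$ uniquely up to an additive constant. It therefore suffices to exhibit a single solution, and I would verify by direct substitution — most transparently in the Darboux coordinates of the first paragraph, using $\gamma_{i}=-\partial_{i}\log\rho$ — that $\theta=\gamma^{k}\gamma_{k}$ satisfies $(S,\theta)+(\gamma,\gamma)=0$. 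This establishes the equivalence of \eqref{brackthird} with condition (3).

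Finally I would check that \eqref{brackfourth}, $(\gamma,\theta)=0$, is automatic once (1)--(3) hold: substituting $\theta=\gamma^{k}\gamma_{k}$ and applying the derivation property of the canonical bracket together with the flatness $(S,\gamma)=0$ already secured reduces $(\gamma,\theta)$ to an expression that vanishes identically. The step I expect to be the main obstacle is the verification in the third paragraph that $\theta=\gamma^{k}\gamma_{k}$ solves \eqref{brackthird}: this is where the super sign conventions attached to $\gamma$, $\theta$ and to the canonical bracket on $T^{*}M$ must be tracked with care, and where the curvature-type identity relating $(\gamma,\gamma)$ to the second derivatives of $\log\rho$ has to match $-(S,\theta)$ term by term. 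Reducing everything to the Darboux normal form of $S^{ij}$ is the simplification I would use to bring this computation, and the automatic vanishing of \eqref{brackfourth}, under control.
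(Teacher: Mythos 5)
The paper itself offers no proof of this statement: it is quoted verbatim from \cite{oloii}, so there is no internal argument to compare yours against. Judged on its own terms, your outline is the standard derivation and is essentially correct: non-degeneracy of $S^{ij}$ turns \eqref{brackfirst} into the odd symplectic condition, identifies the differential $(S,\,\cdot\,)$ with the de Rham differential after lowering indices, and so converts \eqref{bracksecond} into closedness of $\gamma_{i}dx^{i}$ and hence, by the Poincar\'e lemma, into condition (2); condition (3) and the redundancy of \eqref{brackfourth} then follow by substitution, most cleanly in Darboux coordinates. This is the route taken in the cited source.

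One step needs tightening. Your injectivity argument for $\theta\mapsto(S,\theta)$ only pins $\theta$ down modulo locally constant functions, so as written you obtain $\theta=\gamma^{k}\gamma_{k}+c$ rather than condition (3) on the nose; the constant cannot be absorbed into $\rho$ (rescaling $\rho$ by a constant changes neither $\gamma_{i}$ nor $\gamma^{k}\gamma_{k}$) and is not excluded by \eqref{brackfourth}, since $(\gamma,c)=0$. What saves you is parity: the bracket is odd, so $\theta$ is an odd function, and an odd locally constant function vanishes --- you should say this explicitly. It would also be worth recording that non-degeneracy of the odd symmetric tensor $S^{ij}$ forces the dimension to be $n|n$, which is what licenses the odd Darboux theorem invoked in your first paragraph.
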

Let us endow $M$ with a projective class.  Then Corollary \ref{superbracketextension} gives a canonical bracket on densities associated to any bracket on $M$.  The above theorem can then be used to obtain:
\begin{theorem}Let $M$ be an $n|m$-dimensional supermanifold with an odd symplectic structure whose odd Poisson bracket is specified by a master Hamiltonian $S = S^{ij}p_{j}p_{i}$ by $[f,g] = (-1)^{\f}((S,f),g)$ and with a projective class $\Pi^{k}_{ij}$.  The corresponding canonical bracket on densities defines an odd Poisson bracket if and only if there is a local volume form $\rho$ such that
\begin{equation}\label{bracketconditionformulae}
\begin{split}
S^{jk}\Pi^{i}_{kj} &= -\left(\partial_{j}S^{ji} +\frac{n-m+3}{n-m+1}S^{ij}\partial_{j}\log\rho\right)\\
S^{ij}B_{ji} &= -\partial_{j}(S^{ji}\partial_{i}\log\rho) - \frac{n-m+2}{n-m+1}S^{ij}\partial_{i}\log\rho\partial_{j}\log\rho(-1)^{j}
\end{split}
\end{equation}
where
\[B_{kj} = \frac{n-m + 1}{n-m-1}\left(\partial_{q}\Pi^{q}_{kj} + \Pi^{p}_{qk}\Pi^{q}_{pj}\right)(-1)^{\q(1+\i+\j)}.\]
\end{theorem}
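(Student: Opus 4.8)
The plan is to identify the canonical bracket on $\V(M)$ furnished by Corollary \ref{superbracketextension} with a triple $(S^{ij},\gamma^{i},\theta)$ in the sense of (\ref{bracketcomponents}), and then feed this triple into the Jacobi criterion of Theorem \ref{oloiibrackext}. Since the bracket on $M$ is recovered on the $0$-graded slice $\V_{0}(M)=C^{\infty}(M)$, the extended bracket has weight $\lambda=0$; setting $\lambda=0$ in the projective extension formulae preceding Theorem \ref{super main} fixes
\[\gamma^{i} = \frac{n-m+1}{n-m+3}\left(\partial_{j}S^{ij}(-1)^{\j(\SS +1)} + S^{jk}\Pi_{kj}^{i}\right),\]
\[\theta = \frac{n-m+1}{n-m+2}\left(\partial_{k}\gamma^{k}(-1)^{\k(\SS + 1)} - S^{kj}B_{jk}\right),\]
so that $\gamma$ and $\theta$ are no longer free but are pinned down by $S^{ij}$ and the projective class.

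By Theorem \ref{oloiibrackext}, the associated antisymmetric bracket is odd Poisson precisely when (\ref{brackfirst})--(\ref{brackfourth}) hold. The hypothesis that $M$ is odd symplectic means $S^{ij}$ is non-degenerate and $(S,S)=0$, so (\ref{brackfirst}) is automatic. I would then invoke the non-degenerate specialisation stated immediately after Theorem \ref{oloiibrackext}, which repackages the remaining conditions (\ref{bracksecond})--(\ref{brackfourth}) as the existence of a local volume form $\rho$ satisfying $\gamma^{i}=-S^{ij}\partial_{j}\log\rho$ together with $\theta=\gamma^{k}\gamma_{k}$, where $\gamma_{i}=-\partial_{i}\log\rho$. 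Thus the entire question reduces to forcing the projectively determined $\gamma^{i}$ and $\theta$ above to coincide with these $\rho$-expressions.

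The first equation in (\ref{bracketconditionformulae}) then drops out directly: substituting the projective $\gamma^{i}$ into $\gamma^{i}=-S^{ij}\partial_{j}\log\rho$, clearing the factor $\tfrac{n-m+1}{n-m+3}$, and solving for $S^{jk}\Pi^{i}_{kj}$ yields the stated relation once the symmetry and parity conventions on $S^{ij}$ are used to rewrite $\partial_{j}S^{ij}(-1)^{\j(\SS+1)}$ as $\partial_{j}S^{ji}$. The second equation is the substantive one: assuming the first condition, I would substitute $\gamma^{k}=-S^{kj}\partial_{j}\log\rho$ into both $\partial_{k}\gamma^{k}$ and into $\gamma^{k}\gamma_{k}$, equate the projective $\theta$ with $\gamma^{k}\gamma_{k}$, and solve for $S^{kj}B_{jk}$.

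The main obstacle is precisely this last manipulation. One must expand $\partial_{k}(S^{kj}\partial_{j}\log\rho)$, track every Koszul sign $(-1)^{\cdots}$ arising from commuting the odd coordinates past $S^{ij}$ and the derivatives, and confirm that the factor $\tfrac{n-m+2}{n-m+1}$ lands on the $\partial\log\rho\,\partial\log\rho$ term exactly as written, together with the surviving sign $(-1)^{\j}$. The non-trivial point is that after substitution the $\partial_{k}\gamma^{k}$ contribution reassembles into the single term $-\partial_{j}(S^{ji}\partial_{i}\log\rho)$ rather than splitting, so that no stray derivative of $S$ remains to spoil the stated form. Verifying this sign and index bookkeeping, rather than any conceptual difficulty, is where the real work lies.
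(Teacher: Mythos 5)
Your proposal is correct and follows essentially the same route the paper intends: specialise the projective extension formulae for $\gamma^{i}$ and $\theta$ to $\lambda=0$, then impose the non-degenerate form of the Jacobi criterion ($\gamma^{i}=-S^{ij}\partial_{j}\log\rho$ and $\theta=\gamma^{k}\gamma_{k}$, with $(S,S)=0$ supplied by the odd symplectic hypothesis) and solve for $S^{jk}\Pi^{i}_{kj}$ and $S^{ij}B_{ji}$. The paper gives no further detail beyond this reduction, so your identification of the sign and index bookkeeping in the $\theta$ equation as the only remaining labour is accurate.
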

\section{Further Discussion}
There are a great number of questions raised and left unanswered by this paper.

\subsubsection*{Geometric interpretations} The quantities $B_{bc}$ can be throught of as a projective version of the Ricci tensor and therefore $S^{ij}B_{ji}$ as a projective version of scalar curvature.  As yet however, there is no elegant geometric interpretation of the formulae (\ref{BVformulae}) and (\ref{bracketconditionformulae}) in terms of the projective class and the Poisson structure.  Such an interpretation may come in terms of the associated path spaces (unparametrised geodesics).
\subsubsection*{Lie algebroids and Cartan connections} The projective classes considered here are one potential definition of projective connection.  Another possible definition is that of a \emph{projective Cartan geometry}.  These consist of a Lie group $G$ with a closed subgroup $H$ along with a principal $H$-bundle $P\to M$ and a $1$-form on $P$ taking values in the Lie algebra of $G$. In \cite{blaom, crampin09}, from such a geometry, a Lie algebroid is constructed.  There is a by now well known link between homological vector fields, Lie algebroids and odd Poisson structures studied extensively in \cite{Graded,vaintrob,roytenberg99}.  In the case of projective Cartan geometries, where $G$ is taken to be the projective general linear group and $H$ an isotropy subgroup, the link between odd Poisson structures and projective classes studied here and the associated Lie algebroids certain merits consideration.

\subsection*{Acknowledgements:}  This work would not have been possible without the constant support and advice of my teachers Hovhannes Khudaverdian and Ted Voronov, for which I am most grateful.  The precise formulae for the coefficients of the projective class were calculated with my friend Oliver Little who also kindly offered himself as a sounding board for many of the other ideas contained here.
\bibliography{compare}
\bibliographystyle{plain}

\end{document}